\theoremstyle{plain}
\newtheorem{theorem}[equation]{Theorem}
\newtheorem{proposition}[equation]{Proposition}
\newtheorem{definition}[equation]{Definition}
\theoremstyle{remark}
\newtheorem{remark}[equation]{Remark}
\numberwithin{equation}{section}
\newcommand{\sjump}{\hskip .2 cm}
\newcommand{\ssubset}{\subset\subset}
\begin{document}

\title{Oka's Lemma, convexity, and intermediate positivity conditions}
\author{A.-K. Herbig \& J. D. McNeal}
\dedicatory{Dedicated to J.P. D'Angelo, whose work celebrates positivity in many forms}
\thanks{Research of the first author was supported by Austrian Science Fund FWF grant V187N13. Research of the second author was supported in part by National Science Foundation grant DMS--0752826.}
\subjclass[2010]{32T27, 32T35, 32F17}
\address{Department of Mathematics, \newline University of Vienna, Vienna, Austria}
\email{anne-katrin.herbig@univie.ac.at}
\address{Department of Mathematics, \newline Ohio State University, Columbus, Ohio, USA}
\email{mcneal@math.ohio-state.edu}
\begin{abstract}
A new proof of Oka's lemma is given for smoothly bounded, pseudoconvex domains $\Omega\ssubset\mathbb{C}^n$. The method of proof is then also applied to other convexity-like hypotheses on the boundary of $\Omega$.
\end{abstract}
%\date\today
\maketitle

\section{Introduction}

If $\Omega$ is a domain of holomorphy in $\mathbb{C}^n$, Oka's Lemma states that $\phi(z)=-\log d_{b\Omega}(z)$ is plurisubharmonic for $z$ in $\Omega$, where $d_{b\Omega}(z)$ denotes the Euclidean distance from $z$ to $\Omega^c=\mathbb{C}^n\setminus\Omega$. This is a foundational result in several complex variables, with $\phi$ serving as the initial building block in various constructions of holomorphic functions on $\Omega$, e.g., Theorems 4.2.2, 4.4.3, and 4.4.4  in \cite{hormander_scv_book}, Theorem 3.18 in \cite{Range_scv_book}, Theorems~3.4.5 and~5.4.2 in \cite{krantz_scv_book}, and Theorem D.4 in Chapter IX of \cite{GunRos}, among others, hinge on Oka's Lemma.

The aim of this paper is to give a new proof of Oka's Lemma when $\Omega$ has smooth boundary $b\Omega$, and to examine the result as an instance where positivity conditions on the Hessian of a function $f$ are ``spread'' to a wider set of points and vectors by taking functional combinations of $f$ of the form $\chi\circ f$, for $\chi:\mathbb{R}\to\mathbb{R}$.

This point of view is easiest to describe via the signed distance-to-the-boundary function $\delta=\delta_{b\Omega}$; see \eqref{E:sign_dist} below. If $\Omega$ is a smoothly bounded domain of holomorphy, then $\Omega$ is Levi pseudoconvex, see, e.g., Theorem 2.6.12 in
\cite{hormander_scv_book}. Since $\delta$ is a defining function for $\Omega$, it follows that
\begin{equation}\label{E:delta_psc}
\sum_{j,k=1}^n\frac{\partial^2\delta}{\partial z_j\partial\bar z_k}(p)V_j\overline{V}_k\geq 0,\quad\text{if } p\in b\Omega, V\in\mathbb{C}T_p(b\Omega).
\end{equation}
Oka's Lemma says that \eqref{E:delta_psc} implies $\phi=-\log(-\delta)$ is plurisubharmonic on $\Omega$, i.e. that
\begin{equation}\label{E:logdelta_psh}
\sum_{j,k=1}^n\frac{\partial^2\delta}{\partial z_j\partial\bar z_k}(z)W_j\overline{W}_k +\frac 1{d(z)}\left|\sum_{j=1}^n\frac{\partial\delta}{\partial z_j}(z)\, W_j\right|^2 \geq 0,\quad\text{if } z\in\Omega, W\in\mathbb{C}^n.
\end{equation}
Notice that the quadratic form in \eqref{E:delta_psc} is only nonnegative-definite at a small set of points in $\overline\Omega$ (namely, $p\in b\Omega$) and in certain directions (namely, $V\in\mathbb{C}T_p(b\Omega)$), while the form in \eqref{E:logdelta_psh} is nonnegative-definite at all points in $\Omega$ and in all directions. Thus, Oka's Lemma asserts that the positivity (on its complex Hessian) $\phi$ inherits from 
$\delta$ is more widespread than condition \eqref{E:delta_psc} implies at first glance.

This paper grew out of our desire to find a direct proof of Oka's Lemma. The standard proof, see 
%Theorem 7.1 in \cite{Bremermann56}, is not the standard proof
Theorems 2.6.12 in \cite{hormander_scv_book}, Theorem~3.3.5 in \cite{krantz_scv_book}, E.5.11 in \cite{Range_scv_book},
is by contradiction: assuming \eqref{E:logdelta_psh} is violated at some $z\in\Omega$ and in some direction $W$, a boundary point $p$ and a direction $V\in\mathbb{C}T_p(b\Omega)$ are found where \eqref{E:delta_psc} cannot hold. The advantage of the canonical approach is the usual one: negation of the non-strict inequalities result in strict inequalities and these are easier to deal with than \eqref{E:delta_psc} and \eqref{E:logdelta_psh} themselves. 

Our proof deals with the semi-definite inequalities \eqref{E:delta_psc} and \eqref{E:logdelta_psh} directly which, we believe, has intrinsic interest. The proof given here re-casts the semi-definite conclusion \eqref{E:logdelta_psh} as another, non-strict inequality on the {\it square} of the distance function, see \eqref{E:main_ineq1}, then uses simple Taylor analysis to show that \eqref{E:delta_psc} implies \eqref{E:main_ineq1}. Variational arguments often fail when one tries to pass from one non-strict inequality to another, so their success in this instance merits mention. 
The local constancy of $\|\nabla\delta\|$ plays a key role in our approach to this issue.

Once Oka's Lemma (Theorem \ref{T:Oka}) is proved in this way, it is illuminating to apply this method to other convexity-like hypotheses on $b\Omega$ besides pseudoconvexity. The most natural hypotheses of this kind are: (i) the real Hessian of $\delta$ non-negative on the real tangent space to $b\Omega$ (convexity), (ii) the real Hessian of $\delta$ non-negative on the complex tangent space to $b\Omega$ ($\mathbb{C}$-convexity), and (iii) the complex Hessian of $\delta$ non-negative on the real tangent space to $b\Omega$ ($\delta$ plurisubharmonic ``on the boundary''). We examine how these hypotheses yield widespread non-negativity on the Hessians of $\delta$ or $-\log(-\delta)$ in Sections
 \ref{S:convex} and  \ref{S:intermediate}. We follow the method used to prove Theorem \ref{T:Oka} quite closely in these sections, in order to clearly identify how the different hypotheses lead to different conclusions. After our paper was written, we learned that \cite{AndPasSig_ccvx_book} earlier gave a proof of the $\mathbb{C}$-convex case along these lines, so our proof of Theorem \ref{T:cconvex} merely reprises their proof. 
 
 In the final part of Section \ref{S:intermediate}, we examine non-negativity of the complex Hessian of $\delta$ on cones of vectors containing the complex tangent space and lying in the real tangent space. Under this hypothesis, we show (Theorem \ref{T:gamma}) how the size of the Diederich-Forn\ae ss exponent (\cite{mcneal05}) --- but only for the fixed defining function $\delta$ --- is determined by the angle of the cone of non-negativity. Theorem \ref{T:gamma} gives a spectrum of results that naturally interpolate between the conclusion given in Theorem \ref{T:Oka} and that given in Theorem \ref{T:psh}. This result explains an example given in \cite{diederichfornaess77b}, where no $\eta >0$ exists such that $-(-\delta)^\eta$ is plurisubharmonic, and is also related to results in \cite{herbigfornaess07}, \cite{herbigfornaess08} which deals with situations where $\eta$ can be chosen close to 1 (but for defining functions other than $\delta$).

\section{Tangent spaces and Hessians}\label{S:TSH}

Succinct notation for Hessians (real and complex) of smooth functions and tangent spaces (real and complex) will make the arguments in Sections  \ref{S:proof} -- \ref{S:intermediate} quite transparent. We present these objects using global coordinates for brevity, mentioning only the invariance needed in the subsequent proofs.

Let $\Omega\subset{\mathbb C}^n$ denote a domain with smooth boundary $b\Omega$. A local defining function for $\Omega$ in a neighborhood $U$ of $p\in b\Omega$, is a real-valued function $r\in C^\infty(U)$ satisfying
$U\cap\Omega=\left\{ z\in U: r(z)<0\right\}$ and $\nabla r(z)\neq 0\text{ for } z\in U$. 

Let $(z_1,\dots ,z_n)$ denote the standard coordinates on $\mathbb{C}^{n}$, with $z_k=x_{2k-1}+i\, x_{2k}$ for $k=1,\dots ,n$. The usual Cauchy--Riemann vector fields are written $$\frac{\partial}{\partial z_k}=\frac 12\left(\frac{\partial}{\partial x_{2k-1}}-i\,\frac{\partial}{\partial x_{2k}}\right)  
\qquad\text{and}\qquad\frac{\partial}{\partial\bar z_k}=\frac 12\left(\frac{\partial}{\partial x_{2k-1}}+i\,\frac{\partial}{\partial x_{2k}}\right)$$ and
differentiation of a smooth function will be denoted with subscripts, e.g., $f_{z_j}=\frac{\partial f}{\partial z_j}$.
The real tangent space to $b\Omega$ at $q\in b\Omega$, ${\mathbb R}T_q(b\Omega)$, is
\begin{equation}\label{E:R_tangent}
{\mathbb R}T_q(b\Omega) =\left\{ W\in{\mathbb R}^{2n} : \sum_{k=1}^{2n} r_{x_k}(q) \, W_k =0\right\}
\end{equation} 
where $W=\sum W_k\frac\partial{\partial x_k}$. Note that if $\left(y_1,\dots, y_{2n}\right)$ is another, smooth coordinate system in a neighborhood of $q$, then $\sum r_{x_k}(q) \, W_k= \sum r_{y_k}(q) \, \widetilde{W}_k$ if 
$W=\sum W_k\frac\partial{\partial x_k}=\sum\widetilde W_k\frac\partial{\partial y_k}$, so \eqref{E:R_tangent} is invariant of coordinate change.
The complex tangent space to $b\Omega$ at $q\in b\Omega$, ${\mathbb C}T_q(b\Omega)$, is 
\begin{equation}\label{E:C_tangent}
{\mathbb C}T_q(b\Omega) =\left\{ V\in{\mathbb C}^n : \sum_{k=1}^n r_{z_k}(q) \, V_k =0\right\},
\end{equation} 
where $V=\sum V_k\frac\partial{\partial z_k}$. If $(w_1,\dots, w_n)$ is an arbitrary local holomorphic coordinate system near $q$, the vector fields $\frac\partial{\partial w_k}$ are given by the chain rule, and $V=\sum \widetilde{V}_k\frac\partial{\partial w_k}$ is decomposed with respect to the frame $\left\{\partial/\partial w_1,\dots, \partial/\partial w_n\right\}$, it is easy to see \eqref{E:C_tangent} is an invariant definition.
Both \eqref{E:R_tangent} and  \eqref{E:C_tangent} are independent of the choice of local defining function for $\Omega$. 

The Hessian of a smooth function $f: {\mathbb C}^n\longrightarrow {\mathbb C}$ can be viewed as a bilinear form on vectors in $\mathbb{R}^{2n}$  or on vectors in $\mathbb{C}^n$. We invert the usual presentation by considering its action on complex vectors first. The {\it real Hessian} of $f$ at a point $p$ acting on the pair of vectors $(A,B)\in\mathbb{C}^n\oplus \mathbb{C}^n$ is 
\begin{equation}\label{E:H_complex}
{\mathcal H}_{f(p)}\big(A,B\big) = 2\,\text{Re }\left(\sum_{k,\ell=1}^n f_{z_k z_\ell}(p) A_kB_\ell\right) +
2\,\sum_{k,\ell=1}^n f_{z_k \bar z_\ell}(p) A_k\overline{B}_\ell.
\end{equation}
Checking that \eqref{E:H_complex} agrees with the more familiar definition of the Hessian using the underlying real coordinates requires a small computation.  We first fix a specific identification of $\mathbb{C}^n$ and $\mathbb{R}^{2n}$; 
if  $A=(a_1+i\, a_2,\dots,a_{2n-1}+i\, a_{2n})$ and $B=(b_1+i\, b_2,\dots, b_{2n-1}+i\, b_{2n})$
are in $\mathbb{C}^{n}$, then the corresponding vectors in $\mathbb{R}^{2n}$, $(a_1, a_2,\dots,a_{2n-1},a_{2n})$ and
 $(b_1, b_2,\dots,b_{2n-1},b_{2n})$, will also be denoted by the symbols $A$ and $B$.
The definition of the operators $\frac{\partial}{\partial z_k}, \frac{\partial}{\partial\bar z_k}$ and straightforward linear algebra shows
\begin{equation*}
{\mathcal H}_{f(p)}\big(A,B\big) = \sum_{k,\ell=1}^{2n}\frac{\partial^2 f}{\partial x_k\partial x_\ell}(p)a_kb_\ell.
\end{equation*}
The {\it complex Hessian} of $f$ at $p$ is (one-half of) the second term on the right-hand side of \eqref{E:H_complex}:
\begin{equation}\label{E:L_complex}
{\mathcal L}_{f(p)}\big( A,B\big) =\sum_{k,\ell=1}^n f_{z_k \bar z_\ell}(p) A_k\overline{B}_\ell.
\end{equation}
One-half the first term on the right-hand side of \eqref{E:H_complex} --- henceforth, the {\it complement of the Levi form} --- will be denoted
\begin{equation}\label{E:Q_complex}
{\mathcal Q}_{f(p)}\big( A,B\big) = \text{Re }\left(\sum_{k,\ell=1}^n f_{z_k z_\ell}(p) A_kB_\ell\right).
\end{equation}

The forms $\mathcal{L}$ and $\mathcal{Q}$ transform differently under multiplication of their arguments by $i=\sqrt{-1}$. 
Indeed, \eqref{E:L_complex} immediately yields ${\mathcal L}_{f(p)}\big( A,B\big)={\mathcal L}_{f(p)}\big( iA,iB\big)$, while \eqref{E:Q_complex}
shows ${\mathcal Q}_{f(p)}\big( A,B\big)= -{\mathcal Q}_{f(p)}\big(i A,i B\big)$. Consequently, for all pairs 
$(A,B)\in\mathbb{C}^n\oplus \mathbb{C}^n$
\begin{equation*}
{\mathcal H}_{f(p)}\big(A,B\big) + {\mathcal H}_{f(p)}\big(iA,iB\big)= 4{\mathcal L}_{f(p)}\big( A,B\big).
\end{equation*}

It is also convenient to have notation for first-derivative expressions of $f$. The complex gradient of $f$ acting on a vector in $\mathbb{C}T \left({\mathbb C}^n\right)$ will be denoted
\begin{equation}\label{E:complex_gradient}
\left<\partial f(p),V\right> =\sum_{k=1}^n f_{z_k}(p) V_k,
\end{equation}
when $V=\sum V_k \frac\partial{\partial z_k}$. The symbol $\left<\bar\partial f(p),V\right> $ is defined analogously. The real gradient of $f$ acting on a vector $W\in\mathbb{R}^{2n}$ will be denoted
\begin{equation}\label{E:real_gradient}
\left\langle \nabla f(p),W\right\rangle=\sum_{k=1}^{2n} f_{x_k}W_k,
\end{equation}
if $W=\sum_{k=1}^{2n} W_k\frac\partial{\partial x_k}$. 
\medskip

If $f\in C^3(U)$, $U$ open in $\mathbb{C}^n$, and $p=(p_1+ip_2,\dots, p_{2n-1}+i p_{2n}), q=(q_1+iq_2,\dots, q_{2n-1}+i q_{2n})$ are two points in $U$, Taylor's theorem to second-order in real notation says
\begin{equation*}
f(q)=f(p)+\left<\nabla f(p),V\right> +\frac{1}{2} \mathcal{H}_{f(p)}\left(V,V\right)+\mathcal{O}\left(\left\|V\right\|^3\right),
\end{equation*}
where $V=\left(p_1-q_1,\dots ,p_{2n}-q_{2n}\right)\in\mathbb{R}^{2n}$. In complex notation, the same result is expressed
\begin{align}\label{E:complex_taylor}
f(q)=f(p)+ 2\,\text{Re}\left<\partial f(p),W\right> + \mathcal{Q}_{f(p)}\left(W,W\right)  &+\mathcal{L}_{f(p)}\left(W,W\right) \\ 
&+\mathcal{O}\left(\left\|W\right\|^3\right),\notag
\end{align}
where $W= p-q\in\mathbb{C}^n$.
\medskip

Basic convexity notions, on both functions and domains, are easily expressed using the above notation.

\begin{definition}\label{D:convex_psh_fns}
Let $U \subset\mathbb{C}^n$ be an open set and $f\in C^2(U)$. Then

(a) $f$ is convex at $p\in U$ if
\begin{align*}
\mathcal{H}_{f(q)}\big(W,W\big)\geq 0\qquad \forall\sjump q\in U', \sjump W\in\mathbb{C}^n
\end{align*}
for some neighborhood $U'\subset U$ containing $p$.
\smallskip

(b) $f$ is plurisubharmonic at $p\in U$ if 
\begin{align*}
\mathcal{L}_{f(q)}\big(W,W\big)\geq 0\qquad \forall\sjump q\in U', \sjump W\in\mathbb{C}^n
\end{align*}
for some neighborhood $U'\subset U$ containing $p$.
\end{definition}

If $f$ is convex at p, then so is $f\circ L$ for any $\mathbb{R}$-affine coordinate change of the standard coordinates. This follows easily from the chain rule. Plurisubharmonicity is not invariant under a general $\mathbb{R}$-affine coordinate change. But it is invariant under an arbitrary, local biholomorphic map (again, by the chain rule), in particular under a $\mathbb{C}$-affine coordinate change.

\begin{definition}\label{D:convex_psc_domains}
Let $\Omega\subset\mathbb{C}^n$ be a smoothly bounded open set, $p_0\in b\Omega$,
and $r$ is a local defining function for $\Omega$ in a neighborhood of $p_{0}$. Then

(a)  $\Omega$ is convex near $p_{0}$ if

\begin{align*}
\mathcal{H}_{r(p)}\big(V,V\big)\geq 0\qquad \forall\sjump p\in U\cap b\Omega, \sjump V\in\mathbb{R}T_p(b\Omega)
\end{align*}
for some neighborhood $U$ containing $p_{0}$.

(b)  $\Omega$ is pseudoconvex near $p_{0}$ if

\begin{align*}
\mathcal{L}_{r(p)}\big(V,V\big)\geq 0\qquad \forall\sjump p\in U\cap b\Omega, \sjump V\in\mathbb{C}T_p(b\Omega)
\end{align*}
for some neighborhood $U$ containing $p_{0}$.
\end{definition}

Both conditions in Definition \ref{D:convex_psc_domains} are independent of the choice of local defining function. The
conditions are also invariant under a $\mathbb{C}$-affine coordinate change, as mentioned above.

\medskip

\section{Distance to the boundary}\label{S:dist}

The other ingredient in Oka's lemma is the distance-to-the-boundary function, which we denote by $d= d_{b\Omega}$:
$$d(z)=\inf_{q\in b\Omega}\|z-q\|.$$
The signed distance to $b\Omega$ will be denoted by $\delta=\delta_{b\Omega}$:
\begin{align}\label{E:sign_dist}
\delta(z)=\left\{\aligned-d(z),&\qquad z\in\Omega \\ d(z),&\qquad z\in \Omega^c\endaligned\right. \;.
\end{align}
%If $U$ is a sufficiently small neighborhood of $b\Omega$, it holds that $\delta\in C^\infty(U)$. 
%Let $b: U\longrightarrow b\Omega$ denote the closest-point-on-the-boundary map. 

We collect some basic facts about $\delta$ on a smoothly bounded domain in $\mathbb{C}^n$. 
 
 \begin{proposition}\label{P:distance}
If $\Omega\subset\mathbb{C}^{n}$ is a smoothly bounded domain, then there exists a neighborhood $U$ of $b\Omega$ such that:
\begin{itemize}
  \item[(a)] The map $b_{b\Omega}: U\longrightarrow b\Omega$ satisfying $\|b_{b\Omega}(z)-z\|=
  |\delta_{b\Omega}(z)|$  is well-defined.
  
  \item[(b)] The functions $b_{b\Omega}$ and $\delta_{b\Omega}$ are smooth on $U$. 
 
\item[(c)] For each $p\in b\Omega$, let $\nu_{p}$ be the real outward unit normal to $b\Omega$ at $p$. Then there exists a coordinate system $\left(w_1,\dots, w_n\right)$, $w_k=y_{2k-1}+iy_{2k}, k=1,\dots, n$,
which is a $\mathbb{C}$-affine coordinate change of the standard coordinates on $\mathbb{C}^{n}$, such that for all $q=t\nu_p\in U\cap\Omega$, $t\in\mathbb{R}$,
\begin{align}
  \delta_{y_j}(q)=
  \begin{cases}
    0, & j\neq 2n-1\\
    1, & j=2n-1
  \end{cases}\;\;.
  \end{align}
\end{itemize}
\end{proposition}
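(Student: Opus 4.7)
The plan is to derive (a), (b) from the tubular-neighborhood construction for the compact smooth hypersurface $b\Omega$, and then obtain (c) by a unitary $\mathbb{C}$-linear rotation that aligns the outward normal at $p$ with $\partial/\partial y_{2n-1}$. Since $\Omega$ has smooth boundary, the unit outward normal $p\mapsto\nu_p$ is a smooth map $b\Omega\to S^{2n-1}$. I would introduce the normal exponential map
\[
F\colon b\Omega\times\mathbb{R}\longrightarrow\mathbb{C}^n,\qquad F(p,t)=p+t\,\nu_p,
\]
and check that at any $(p,0)$ the differential $dF$ is an isomorphism $\mathbb{R}T_p(b\Omega)\oplus\mathbb{R}\to\mathbb{R}T_p(b\Omega)\oplus\mathbb{R}\nu_p=\mathbb{R}^{2n}$. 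The inverse function theorem then supplies local inverses; compactness of $b\Omega$ gives a uniform $\epsilon_0>0$ on which $F$ is a local diffeomorphism. A standard separation argument shrinks $\epsilon_0$ to some $\epsilon>0$ on which $F$ is also \emph{globally} injective: if for every $n$ there were distinct pairs $(p_n,t_n)\neq(p_n',t_n')$ in $b\Omega\times(-1/n,1/n)$ with the same image, extracting subsequences would give $p_n, p_n'\to q^*$ and $F(p_n,t_n)\to q^*$, contradicting local injectivity of $F$ near $(q^*,0)$. Let $U=F(b\Omega\times(-\epsilon,\epsilon))$ and write $F^{-1}=(b_{b\Omega},\widetilde\delta)$; both components are smooth. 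To identify $\widetilde\delta$ with $\delta$, fix $z=F(p,t)\in U$: any strictly closer $p^*\in b\Omega$ would have to satisfy $z=p^*+s\,\nu_{p^*}$ with $|s|<|t|<\epsilon$ (the closest boundary point lies along a normal line), contradicting injectivity of $F$; hence $b_{b\Omega}(z)=p$ is the true nearest point, and the signs match because $\nu_p$ is outward.

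For (c), view the real unit vector $\nu_p\in\mathbb{R}^{2n}$ as a unit vector $u\in\mathbb{C}^n$ in the standard Hermitian norm (the two norms agree on $\mathbb{R}^{2n}\cong\mathbb{C}^n$), extend $u$ to a unitary basis $(u_1,\ldots,u_{n-1},u)$ of $\mathbb{C}^n$, and let $M$ be the unitary matrix with these as columns. The $\mathbb{C}$-affine change of coordinates $z=p+Mw$ places $p$ at $w=0$ and maps $\nu_p$ to $e_n\in\mathbb{C}^n$, which in the real coordinates $w_k=y_{2k-1}+iy_{2k}$ is $\partial/\partial y_{2n-1}$. Under this identification, the segment $\{t\nu_p:|t|<\epsilon\}$ becomes $\{(0,\ldots,0,t,0)\}$ in $y$-coordinates, and part (b) yields $\delta(0,\ldots,0,t,0)=t$ for all such $t$. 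Differentiating in $y_{2n-1}$ at $q$ gives $\delta_{y_{2n-1}}(q)=1$. The remaining derivatives vanish by the eikonal-type bound: $\delta$ is $1$-Lipschitz by the triangle inequality for Euclidean distance, so $\sum_{j=1}^{2n}\delta_{y_j}(q)^2=|\nabla\delta(q)|^2\le 1$, and $\delta_{y_{2n-1}}(q)^2=1$ forces $\delta_{y_j}(q)=0$ for $j\neq 2n-1$.

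The only genuinely delicate step is the globalization of local injectivity in the tubular-neighborhood argument of paragraph one — a routine compactness-and-continuity point that nonetheless has to be stated carefully. Everything else reduces either to linear algebra (the unitary alignment of $\nu_p$ with the real $e_n$-direction) or to the one-line Lipschitz bound $|\nabla\delta|\le 1$, so (c) follows immediately once (a), (b) are in hand.
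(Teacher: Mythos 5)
Your argument is correct and self-contained, whereas the paper does not prove Proposition~\ref{P:distance} at all: it simply cites Federer (Lemma~4.1.1) for part (a) and two further references for (b) and (c). So comparing ``approaches'' is really a comparison of a complete proof against a string of citations.

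Your route is the standard tubular-neighborhood argument: the normal exponential map $F(p,t)=p+t\nu_p$ has invertible differential along $b\Omega\times\{0\}$, the inverse function theorem gives local diffeomorphisms, compactness of $b\Omega$ gives a uniform $\epsilon$, and a sequential-compactness argument upgrades local to global injectivity. One small point of care in that last step: to identify $F^{-1}$'s two components with $(b_{b\Omega},\delta)$ you should take $p^*$ to be \emph{a} nearest point (which exists by compactness), note $z-p^*\perp T_{p^*}(b\Omega)$, write $z=p^*+s\nu_{p^*}$ with $|s|\le|t|<\epsilon$, and conclude $(p^*,s)=(p,t)$ by injectivity. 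Phrasing this as ``any strictly closer $p^*$ gives a contradiction'' only shows $p$ is \emph{a} nearest point; to get well-definedness (uniqueness) you need the same injectivity to rule out a distinct equidistant $p^*$, which your argument in fact does once the $\le$ is used rather than $<$. For (c), the unitary rotation sending $\nu_p$ to $e_n$ combined with the $1$-Lipschitz bound $\|\nabla\delta\|\le 1$ (forcing the off-normal derivatives to vanish once $\delta_{y_{2n-1}}=1$) is a clean and elementary way to finish; the cited references in the paper obtain the same eikonal information by different bookkeeping. Your proof buys self-containment and transparency about where smoothness of $b\Omega$ enters; the paper's citations buy brevity. Both are sound.
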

For a  proof of (a) see, e.g., \cite{federer59}, Lemma 4.1.1 on pgs. 444--445. Proofs of (b) and (c) follow from Corollary 5.2 in \cite{HerMcN09} after using Lemma 1, pg. 382, in \cite{GilbargTrudinger}.

\section{A proof of Oka's Lemma}\label{S:proof}

The significant content of Oka's Lemma is that $-\log d_{b\Omega}$ is plurisubharmonic \textit{near} $b\Omega$, if $\Omega$ is pseudoconvex. Since the new feature in our proof also occurs near $b\Omega$, we shall focus on proving the following 

\begin{theorem}[Version of Oka's Lemma]\label{T:Oka} Let $\Omega$ be a smoothly bounded, pseudoconvex domain in $\mathbb{C}^n$. There exists a neighborhood $U$ of $b\Omega$ such that $-\log\left(-\delta(z)\right)$ is plurisubharmonic for $z\in U\cap\Omega$.

\end{theorem}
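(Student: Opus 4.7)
A chain-rule computation gives
$\delta(z)^2\,\mathcal{L}_{-\log(-\delta)}(z)(W,W) = (-\delta(z))\,\mathcal{L}_\delta(z)(W,W) + |\langle\partial\delta(z),W\rangle|^2$,
so the conclusion is equivalent to the non-strict inequality
$\mathcal{L}_{\delta^2}(z)(W,W) \le 4\,|\langle\partial\delta(z),W\rangle|^2$
on $U\cap\Omega$ --- presumably the inequality \eqref{E:main_ineq1} on the square of the distance function foreshadowed in the introduction.

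\textbf{Pointwise reduction.} Fix $z_0\in U\cap\Omega$, $p=b_{b\Omega}(z_0)$, $t=-\delta(z_0)>0$. Using the $\mathbb{C}$-affine coordinates $(w_1,\dots,w_n)$ of Proposition~\ref{P:distance}(c), one has $\partial\delta(z_s)=\tfrac12\,dw_n$ along the normal segment $\{z_s=p-s\nu_p\}$ and $\mathbb{C}T_p(b\Omega)=\{W_n=0\}$. Decomposing $W=V+W_n e_n$ with $V\in\mathbb{C}T_p(b\Omega)$, the target reduces to $t\,\mathcal{L}_\delta(z_0)(W,W)+|W_n|^2/4\ge 0$. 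The coefficient $t\,\delta_{z_n\bar z_n}(z_0)+1/4$ of $|W_n|^2$ is positive for small $t$, so completing the square in $W_n$ (the Schur-complement condition) further reduces the claim, for every $V\in\mathbb{C}T_p(b\Omega)$, to
$$\mathcal{L}_\delta(z_0)(V,V)\,\bigl(1+4t\,\delta_{z_n\bar z_n}(z_0)\bigr)\;\ge\;4t\,\Bigl|\sum_{j<n}\delta_{z_j\bar z_n}(z_0)V_j\Bigr|^2.$$

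\textbf{Taylor analysis using $\|\nabla\delta\|\equiv 1$.} Near $b\Omega$ the identity $\sum_j|\delta_{z_j}|^2\equiv 1/4$ holds. Differentiating it once in $z_m$ and evaluating on the normal segment (where $\delta_{z_j}(z_s)=\tfrac12\delta_{j,n}$) yields the algebraic relation $\delta_{z_jz_n}(z_s)=-\delta_{z_j\bar z_n}(z_s)$ for all $j$. Differentiating the same identity once in $z_m$ and once in $\bar z_\ell$ yields
$\partial_s\mathcal{L}_\delta(z_s)(V,V)\big|_{s=0}=2\,\bigl[\|QV\|^2+\|L\bar V\|^2\bigr](p)$,
a sum of two positive-semidefinite contributions built from $Q=(\delta_{z_jz_k}(p))$ and $L=(\delta_{z_j\bar z_k}(p))$. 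Taylor-expand both sides of the Schur estimate in $t$ about $p$: the $t^0$-term on the LHS is $\mathcal{L}_\delta(p)(V,V)\ge 0$ by pseudoconvexity while the RHS vanishes; the first-order coefficient comparison reduces to the single inequality $\|QV\|^2+\|L\bar V\|^2\ge 2\,|c(V;p)|^2$ with $c(V;p):=\sum_{j<n}\delta_{z_j\bar z_n}(p)V_j$. The first identity above closes this: it forces $(QV)_n(p)=-c(V;p)$ and $\overline{(L\bar V)_n(p)}=c(V;p)$, so each of $\|QV\|^2$ and $\|L\bar V\|^2$ individually dominates $|c(V;p)|^2$, and their sum provides precisely the factor of $2$ required.

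\textbf{The main obstacle.} Since both pseudoconvexity and the target are non-strict, the argument must close to first order on the Levi-null subspace where the leading Taylor terms vanish. Without the coincidence $\delta_{z_jz_n}=-\delta_{z_j\bar z_n}$ produced by $\|\nabla\delta\|\equiv 1$, only one of the lower bounds $\|QV\|^2\ge|c|^2$ or $\|L\bar V\|^2\ge|c|^2$ would be available, the factor of $2$ would be lost, and no first-order perturbation could salvage the non-strict inequality --- this is what the authors mean by ``variational arguments often fail.'' Ensuring that higher-order Taylor remainders do not spoil the inequality in the sharp cases is a routine (but necessary) final check once these two consequences of local constancy of $\|\nabla\delta\|$ are in hand.
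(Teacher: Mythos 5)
Your reformulation of the target as $\mathcal{L}_{\delta^2}(W,W)\le 4|\langle\partial\delta,W\rangle|^2$ is indeed equivalent to the paper's \eqref{E:main_ineq1}, and your two identities coming from $\|\nabla\delta\|\equiv 1$ --- namely $\delta_{z_j z_n}=-\delta_{z_j\bar z_n}$ on the normal segment and $\partial_s\mathcal{L}_{\delta(z_s)}(V,V)\big|_{s=0}=2\bigl(\|QV\|^2+\|L\bar V\|^2\bigr)$ --- both check out, as does the clean observation that each of $\|QV\|^2$ and $\|L\bar V\|^2$ dominates $|c(V;p)|^2$ through its $n$-th component. Nevertheless your argument is structurally different from the paper's and, as written, has a genuine gap.

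The difference: you fix the tangential direction $V$ and Taylor-expand in the normal parameter $t$, reducing to a Schur-complement inequality whose coefficients you then compare order-by-order in $t$. The paper instead fixes the point $q\in U\cap\Omega$ and Taylor-expands $D(q+V)$ in the perturbation direction $V$, getting its crucial upper bound not from any Taylor analysis in $t$ but from the free comparison $D(q+V)\le\|q+V-p(V)\|^2$ with the explicit nearby boundary point $p(V)=(V',-h(V',0))$. Summing the resulting estimates in directions $V$ and $iV$ cancels $\mathcal{Q}$, and the $O(\|V\|^3)$ error is killed by homogeneity in $V$. That device is what allows a non-strict hypothesis to yield a non-strict conclusion at a fixed $q$, and it is absent from your proposal.

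The gap: matching the $t^0$ and $t^1$ Taylor coefficients of your Schur inequality is not sufficient, and the ``routine final check'' you defer is precisely where the approach fails. Both coefficients can be simultaneously tight: $\mathcal{L}_{\delta(p)}(V,V)=0$ happens at every weakly pseudoconvex point in a Levi-null direction, and $\|QV\|^2+\|L\bar V\|^2=2|c(V;p)|^2$ holds whenever $QV$ and $L\bar V$ are supported only in the $n$-th slot (nothing in the hypotheses prevents this). In that regime the inequality
$$\mathcal{L}_{\delta(z_t)}(V,V)\bigl(1+4t\,\delta_{z_n\bar z_n}(z_t)\bigr)\;\ge\;4t\,|c(V;z_t)|^2$$
reduces to a comparison of $t^2$-coefficients, which involve $\partial_t^2\mathcal{L}_{\delta}(z_t)(V,V)$, $\delta_{z_n\bar z_n}(p)$ (sign unconstrained), and $\partial_t|c(V;z_t)|^2$; none of these is controlled by pseudoconvexity or by $\|\nabla\delta\|\equiv 1$, and there is no reason the needed sign emerges. (The parallel worry for $\mathcal{L}_{\delta(p)}(V,V)>0$ but small is absorbed only on a $t$-interval that shrinks as the Levi form degenerates, so no uniform neighborhood $U$ is produced that way either.) This is exactly the kind of obstruction the paper's remark about variational arguments is pointing at; the paper's route around it is the geometric upper bound on $D$, not a finer Taylor expansion in $t$. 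To repair your argument you would need an additional mechanism that either kills the bad higher-order terms or replaces the order-by-order comparison entirely.
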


\begin{proof} For the expansion of a normed expression below (see \eqref{E:D_main}), it is convenient to consider the square of the function $d$ rather than $d$ (or $\delta$) itself; let
$$D(z)=d_{b\Omega}^2(z)=\inf \left\{ \| z-q\|^2: q\in b\Omega\right\}.$$
Obviously, $-\log (-\delta (z))$ is plurisubharmonic iff $-2\log(- \delta (z))$ is plurisubharmonic,
and $-2\log(-\delta (z)) = -\log D(z)$ if $z\in\Omega$.  Thus, it suffices to show there exists a neighborhood $U$ of $b\Omega$ such that
\begin{align}\label{E:main_ineq1}
{\mathcal L}_{D(z)}\big(V,V\big)\leq \frac {\left|\left<\partial D(z), V\right>\right|^2}{D(z)}
\end{align}
for all $z\in U\cap\Omega$ and all $V\in{\mathbb C}^n$.

\medskip

Let $U$ be a small enough neighborhood of $b\Omega$ so that the projection map $b$ is well-defined and smooth.
For a given $q\in U\cap\Omega$, make the $\mathbb{C}$-affine coordinate change in Proposition \ref{P:distance} to achieve
\begin{itemize}
\item[(i)] $b(q) = 0$   $\left(=(0,\dots, 0) = (0',0)\in {\mathbb C}^{n-1}\times{\mathbb C})\right)$
\item[(ii)] $q=\left(0', a\right),\quad a<0$
\item[(iii)] ${\mathbb R}T_0\left(b\Omega\right) =\left\{ \text{Re}z_n=0\right\}$.
\end{itemize}
We shall continue to denote the changed coordinates as $(z_1,\dots, z_n)$, with $z_k= x_{2k-1}+ i x_{2k}$, 
and do all subsequent computations with respect to these coordinates.

In a neighborhood $U_q$ of 0, the Implicit Function Theorem says that $b\Omega$ can be viewed as a smooth graph over ${\mathbb R}T_0\left(b\Omega\right)$. Explicitly, we can find a defining function, $r(z)$, of the form
\begin{align}\label{E:implicit_defining}
r(z)=\text{Re } z_n +h\left(z', \text{Im }z_n\right),
\end{align}
where $h\in C^2(U_{q})$, $h(0)=0$ and $\nabla h(0) = (0,\dots, 0)$. 

Clearly $D(q)=a^2$. It follows from Proposition \ref{P:distance} that  $D_{x_{2n-1}}(q)= 2a$ and that all the other real partial derivatives of $D$ vanish at $q$. This translates to the following information on the complex partials of $D$:
\begin{align}\label{E:D_complex_partials}
D_{z_k}(q)=\left\{\aligned 0&\qquad\text{if } k\neq n, \\ a&\qquad\text{if } k=n.\endaligned\right.
\end{align}

Let $V=\left(V', V_n\right)$ denote an arbitrary direction in $\mathbb{C}^n$, with $\|V\|$ small enough so that $q+V$ lies in $U_q$. Decompose $V_n$ into its real and imaginary parts, $V_n = s+it$, and note
$$q +V =\left(V', a +s+ it\right).$$
The form of the defining function $r$ suggests a suitable point on $b\Omega$ with which to estimate $D(q+V)$:  \eqref{E:implicit_defining} says that $\left(V', -h(V',0)\right)\in b\Omega$ for any $(V',0)\in U_q$. Consequently,
\begin{align}\label{E:D_main}
D(q+V)&\leq\|(V', a +s+ it) -(V', -h(V',0))\|^2 \\
&=\| a+V_n +h(V',0)\|^2\notag \\ 
&= a^2 +2(a+s)\cdot h(V',0)+ 2a s +|V_n|^2+ h^2(V',0)\notag
\\ &=a^2+2a\cdot h(V',0)+ 2a s +|V_n|^2+ {\mathcal O}\left(\|V\|^3\right)\notag.
\end{align}
The last equality follows since $h$ vanishes to second order at 0.

Set $\widetilde{V}=(V',0)$ and notice that $h(\widetilde{V})=r(\widetilde{V})$. Since $\widetilde{V}\in{\mathbb R}T_0(b\Omega)$, Taylor's theorem gives
\begin{align}\label{E:Taylorh}
h(\widetilde{V})= \frac{1}{2}\,{\mathcal H}_{r(0)}\big(\widetilde{V}, \widetilde{V}\big) +{\mathcal O}\Bigl(\bigl\|\widetilde{V}\bigr\|^3\Bigr).
\end{align}
However, $\widetilde{V}$ actually belongs to ${\mathbb C}T_0(b\Omega)$, so 
${\mathcal L}_{r(0)}\big(\widetilde{V},\widetilde{V}\big)\geq 0$ by pseudoconvexity. Since $a <0$, it follows that
the second-order part of $2a\cdot h(V',0)$ in \eqref{E:D_main} corresponding to the complex Hessian is negligible, i.e., that
\begin{align*}
a{\mathcal H}_{r(0)}\big(\widetilde{V}, \widetilde{V}\big)\leq a\, 2{\mathcal Q}_{r(0)}\big(\widetilde{V},\widetilde{V}\big).
\end{align*}
Returning to \eqref{E:D_main}, we obtain the estimate
\begin{align*}
D(q+V)&\leq a^2 + 2a\cdot{\mathcal Q}_{r(0)}\big(\widetilde{V}, \widetilde{V}\big)+ 2a s +|V_n|^2 + {\mathcal O}\left(\|V\|^3\right)\notag \\ &= D(q) + 2a\cdot{\mathcal Q}_{r(0)}\big(\widetilde{V}, \widetilde{V}\big)+  2\text{Re}\left(\left<\partial D(q),V\right>\right) +
\frac{\left|\left<\partial D(q),V\right>\right|^2}{D(q)} \\ &\hspace{8.8cm}+{\mathcal O}\left(\|V\|^3\right),\notag
\end{align*}
where \eqref{E:D_complex_partials} and the fact that $D(q)=a^2$ are used to obtain the last equality. A similar estimate holds
in the direction $iV$, the only changes occurring in the second and third terms:
\begin{align*}
D(q+iV)&\leq D(q) - 2a\cdot{\mathcal Q}_{r(0)}\big(\widetilde{V}, \widetilde{V}\big)-  2\text{Im}\left(\left\langle\partial D(q),V\right\rangle\right) +
\frac{\left|\left<\partial D(q),V\right>\right|^2}{D(q)} \\ &\hspace{8.9cm}+{\mathcal O}\left(\|V\|^3\right).\notag
\end{align*}
Adding these two estimates yields, for $S=D(q+V)+D(q+iV)$,
\begin{align}\label{E:D2}
S\leq 2D(q)+F(q,V) +
2\,\frac{\left|\left<\partial D(q),V\right>\right|^2}{D(q)} +{\mathcal O}\left(\|V\|^3\right),
\end{align}
where $F(q,V)= 2\,\text{Re}\left(\left<\partial D(q),V\right>\right)  -2\,\text{Im}\left(\left<\partial D(q),V\right>\right)$. 
\bigskip

On the other hand, expanding $D(q+V)$ and $D(q+iV)$ about $q$ by Taylor's theorem gives
\begin{align*}
D(q+V)&= D(q) + 2\text{Re}\left(\left<\partial D(q),V\right>\right) +  {\mathcal Q}_{D(q)}(V,V)+{\mathcal L}_{D(q)}
(V,V) \\&\hspace{8.5cm}+ {\mathcal O}\left(\|V\|^3\right)
\end{align*}
and
\begin{align*}
D(q+iV)&= D(q) - 2\text{Im}\left(\left<\partial D(q),V\right>\right) - {\mathcal Q}_{D(q)}(V,V)+{\mathcal L}_{D(q)}
(V, V)\\ &\hspace{8.5cm}+ {\mathcal O}\left(\|V\|^3\right).
\end{align*}
Adding these two equations yields
\begin{align}\label{E:D3}
S= 2D(q)+F(q,V) + 2\, {\mathcal L}_{D(q)}
(V, V)+ {\mathcal O}\left(\|V\|^3\right).
\end{align}
Estimating \eqref{E:D3} from above by \eqref{E:D2} and making the obvious cancellations yields
\begin{align*}
2\, {\mathcal L}_{D(q)}(V, V)\leq 2\,\frac{\left|\left<\partial D(q),V\right>\right|^2}{D(q)} +{\mathcal O}\left(\|V\|^3\right).
\end{align*}
Homogeneity considerations in $V$ then show \eqref{E:main_ineq1} holds for $q\in U_q\cap\Omega$. Since the argument above can be given for every $q\in U\cap\Omega$, the proof is complete.
\end{proof}

\section{Convex domains}\label{S:convex}

If $\Omega\subset\mathbb{C}^n$ is convex, it is not necessary to compose $\delta$ with a function like $\chi(x)=-\log(-x)$ in order to get a conclusion related to Theorem \ref{T:Oka}. Indeed, if $\mathcal{H}_{\delta(p)} \geq 0$ on ${\mathbb R}T_p(b\Omega)$ for $p\in b\Omega$, then $\delta$ itself inherits widespread positivity on its real Hessian:
 
 \begin{theorem}\label{T:convex}
 Let $\Omega$ be a smoothly bounded, convex domain in $\mathbb{C}^n$. There exists a neighborhood $U$ of $b\Omega$ such that 
 $\delta(z)$ is a convex function for $z\in U\cap\Omega$.
 \end{theorem}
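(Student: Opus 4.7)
My plan is to follow the proof strategy of Theorem~\ref{T:Oka} closely, with two modifications: (i) work directly with $\delta$ rather than its square $D$, since the real Hessian (not the complex Hessian) is at issue and no logarithmic composition is needed, and (ii) use a boundary competitor adapted to the \emph{real} tangent space rather than the complex tangent space. Concretely, for a fixed $q\in U\cap\Omega$, I would apply the $\mathbb{C}$-affine change of coordinates of Proposition~\ref{P:distance} to normalize so that $b(q)=0$, $q=(0',a)$ with $a<0$, $\mathbb{R}T_0(b\Omega)=\{\text{Re}\,z_n=0\}$, and $\nabla\delta(q)=(0,\dots,0,1,0)$, and obtain a local defining function $r(z)=\text{Re}\,z_n+h(z',\text{Im}\,z_n)$ with $h(0)=0$ and $\nabla h(0)=0$.

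For an arbitrary $V=(V',s+it)\in\mathbb{C}^n$ of small norm I would use the boundary point $p_V:=(V',-h(V',t)+it)\in b\Omega$, which differs from the Oka-proof choice $(V',-h(V',0))$ in that its last coordinate absorbs the full $\text{Im}\,V_n$ direction. The elementary bound $d(q+V)\leq\|q+V-p_V\|=|a+s+h(V',t)|=-(a+s+h(V',t))$ (valid for small $V$, since $a<0$) then yields $\delta(q+V)\geq a+s+h(V',t)$. Because $h$ vanishes to second order at $0$, Taylor expansion gives $h(V',t)=\tfrac12\mathcal{H}_{r(0)}(\widetilde V,\widetilde V)+\mathcal{O}(\|V\|^3)$ with $\widetilde V:=(V',it)$; crucially $\widetilde V\in\mathbb{R}T_0(b\Omega)$, so convexity of $\Omega$ furnishes $\mathcal{H}_{r(0)}(\widetilde V,\widetilde V)\geq 0$.

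To close the argument I would Taylor-expand $\delta$ about $q$: using $\delta(q)=a$ and $\langle\nabla\delta(q),V\rangle=s$ (both consequences of Proposition~\ref{P:distance}), one gets $\delta(q+V)=a+s+\tfrac12\mathcal{H}_{\delta(q)}(V,V)+\mathcal{O}(\|V\|^3)$. Comparing this equality with the lower bound above and making the obvious cancellations yields $\tfrac12\mathcal{H}_{\delta(q)}(V,V)\geq\tfrac12\mathcal{H}_{r(0)}(\widetilde V,\widetilde V)+\mathcal{O}(\|V\|^3)\geq\mathcal{O}(\|V\|^3)$; homogeneity in $V$ (rescale $V\mapsto\lambda V$ and let $\lambda\to 0$) gives $\mathcal{H}_{\delta(q)}(V,V)\geq 0$. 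Since $q\in U\cap\Omega$ was arbitrary, $\delta$ is convex on $U\cap\Omega$.

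The main subtlety, as in the proof of Theorem~\ref{T:Oka}, lies in selecting the boundary competitor $p_V$: choosing its last coordinate to be $-h(V',t)+it$ rather than $-h(V',0)$ is exactly what makes the relevant second-order data of $h$ appear as $\mathcal{H}_{r(0)}$ evaluated on a vector in $\mathbb{R}T_0(b\Omega)$, matching the convexity hypothesis. Everything else is routine Taylor analysis; notably, working directly with $\delta$ (rather than $D=\delta^2$) sidesteps the $|\langle\partial D,V\rangle|^2/D$ correction term that was essential in the Oka case, because $\delta$ enters linearly in the normal direction.
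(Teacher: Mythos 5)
Your proof is correct, and it departs from the paper's argument in one substantive way: you work directly with $\delta$, whereas the paper works with $D=\delta^2$. The boundary competitor is actually the same --- the paper introduces a free parameter $c$ in $(V',-h(V',c)+ic)$ and then sets $c=t$, which is precisely your $p_V$. The paper squares the resulting distance, expands the binomial $(a+s+h(V',t))^2$, and compares against the Taylor expansion of $D$, finally converting back to $\delta$ via the identity $\mathcal{H}_{D}(V,V)=\frac{|\langle\nabla D,V\rangle|^2}{2D}+2\delta\,\mathcal{H}_{\delta}(V,V)$. You instead observe that $a+s+h(V',t)<0$ for small $V$, so the norm can be read off without squaring, giving the lower bound $\delta(q+V)\geq a+s+h(V',t)$, and then you compare against the Taylor expansion of $\delta$ itself (using $\delta(q)=a$, $\langle\nabla\delta(q),V\rangle=s$ from the normalization). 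This sidesteps both the binomial expansion and the Hessian-conversion identity. Your version is a bit leaner; what the paper's version buys is formal uniformity with the proof of Theorem~\ref{T:Oka}, where passing to $D$ is genuinely needed because the competitor differs from $q+V$ in a full complex coordinate and the resulting norm-squared is not the square of an explicit linear quantity. The paper deliberately mirrors that structure across Sections 4--6 so the reader can trace exactly how each hypothesis feeds into the same machinery; your streamlining is valid but breaks that parallelism.
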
 
 
 Different proofs of Theorem \ref{T:convex} are known, see pgs. 354--357 in \cite{GilbargTrudinger}, pgs. 57--60 in \cite{hormander_convex_book} and Corollaries 5.7 and 5.12 in \cite{HerMcN09}. In fact, $\delta$ is convex on a full neighborhood of $b\Omega$, not just on $U\cap\Omega$; see Remark \ref{R:convexoutside} below. As mentioned in the introduction, the proof below is parallel to the proof of Theorem \ref{T:Oka}, to clearly trace how the stronger hypothesis in Theorem \ref{T:convex} leads to its stronger conclusion.

 \begin{proof}[Proof of Theorem \ref{T:convex}]
 As before, consider the function $D(z)=\left(\delta(z)\right)^{2}$.  A straightforward computation gives
\begin{align}\label{E:HessDHessdelta}
\mathcal{H}_{D(z)}\big(V,V\big)=\frac{\left|\langle\nabla D(z), V\rangle \right|^{2}}{2D(z)}+2\delta(z) 
\mathcal{H}_ {\delta(z)}\big(V,V\big)
\end{align}
for all $z$ near $b\Omega$ and $V\in\mathbb{C}^{n}$. To prove Theorem \ref{T:convex}, it therefore suffices to show that there is a neighborhood $U$ of $b\Omega$ such that
\begin{align}\label{E:main_ineq2}
{\mathcal H}_{D(z)}\big(V,V\big)\leq \frac {\left|\left<\nabla D(z), V\right>\right|^2}{2\,D(z)}\qquad\sjump\forall\sjump z\in U\cap\Omega,\sjump V\in\mathbb{C}^{n}.
\end{align}

Let $U$ be a small enough neighborhood of $b\Omega$ so that the projection map $b$ is well-defined and smooth.
Fix $q\in U\cap\Omega$, make the $\mathbb{C}$-affine coordinate change in Proposition \ref{P:distance} and obtain
\begin{itemize}
\item[(i)] $b (q) = 0$   $\left(=(0,\dots, 0) = (0',0)\in {\mathbb C}^{n-1}\times{\mathbb C})\right)$
\item[(ii)] $q=\left(0', a\right),\quad a <0$
\item[(iii)] ${\mathbb R}T_0\left(b\Omega\right) =\left\{ \text{Re}z_n=0\right\}$.
\end{itemize}
Continue to denote the changed coordinates as $(z_1,\dots, z_n)$, with $z_k= x_{2k-1}+ i x_{2k}$, as in the proof of Theorem \ref{T:Oka}.
 
 Apply the Implicit Function Theorem as before: there exists a neighborhood $U_q$ of the origin, a function $h\in C^\infty(U_{q})$ with
 $h(0)=0$ and $\nabla h(0)=(0,\dots 0)$, such that
 \begin{align}\label{E:implicit_defining2}
r(z)=\text{Re } z_n +h\left(z', \text{Im }z_n\right)
\end{align}
is a local defining function for $\Omega$ in $U_{q}$.
      
 Clearly $D(q)=a^2$, while $D_{x_{2n-1}}(q)= 2a$ and all the other partial derivatives of $D$ vanish at $q$, by Proposition \ref{P:distance}.
 Let $V=(V',V_{n})\in\mathbb{C}^{n}$ be given, write $V_n=s+i t$, and consider $q+V$ as a small perturbation of $q$. We have that 
 $\big(V',-h(V',c)+ ic\big)$ lies in $b\Omega$, for any $c\in\mathbb{R}$. Thus,
 \begin{align*}
 D(q+V)&\leq\left\|(V',a+V_{n})-\left(V',-h(V',c)+ic\right)\right\|^{2} \\
 &=\left\|a+s +h(V',c)+i(t-c)\right\|^{2}\\
 &= \left(a+s+h(V', t)\right)^2,
 \end{align*}
 if $c$ is chosen equal to $t$. Expanding this square yields
 \begin{align}\label{E:D_main2}
 D(q+V)&\leq a^2 + 2as +s^2+2(a+s)\cdot h(V',t) + h^2(V',t)
\\ &=a^2 + 2a s +\left(\text{Re}\,V_n\right)^2 +2a\cdot h(V',t)+ {\mathcal O}\left(\|V\|^3\right),\notag
\end{align}
since $h$ vanishes to second order at 0.
 
 Now set $\widetilde{V}=(V',it)$. Note that $h(V',t)=r(\widetilde{V})$ and that $\widetilde{V}\in\mathbb{R}T_0(b\Omega)$ (though not in $\mathbb{C}T_0(b\Omega)$, unless $t=0$). Taylor's theorem gives
 \begin{align}\label{E:Taylorh2}
 h(V',t)= \frac 12\,{\mathcal H}_{r(0)}\big(\widetilde{V}, \widetilde{V}\big) +{\mathcal O}\Bigl(\bigl\|\widetilde{V}\bigr\|^3\Bigr).
 \end{align}
 Convexity of $\Omega$ implies $H_{r(0)}(\widetilde{V},\widetilde{V})$ is 
 non-negative, so we have $h(V',t)\geq -C\|\widetilde{V}\|^{3}$ for some constant $C>0$. Because $a<0$, it follows from
 \eqref{E:D_main2} that
   \begin{align}\label{E:D2a}
     D(q+V)&\leq a^2+2a s +\left(\text{Re}\,V_n\right)^2+ {\mathcal O}\Bigl(\bigr\|V\bigl\|^3\Bigr) \\
    &= D(q)+\bigl\langle\nabla D(q),V\bigr\rangle+
     \frac{|\langle\nabla D(q),V\rangle|^{2}}{4D(q)}+\mathcal{O}\Bigl(\bigl\|V\bigr\|^{3}\Bigr).\notag
   \end{align}
   
  However, expanding $D(q+V)$ about $q$ by Taylor's theorem gives
   \begin{align}\label{E:D3a}
     D(q+V)=D(q)+\langle\nabla D(q),V\rangle+\frac{1}{2} \mathcal{H}_{D(q)}(V,V)+
     \mathcal{O}\Bigl(\bigl\|V\bigr\|^{3}\Bigr).
   \end{align}
   Estimating \eqref{E:D3a} from above by \eqref{E:D2a} leads to
   \begin{align*}
     \frac 12\,\mathcal{H}_{D(q)}(V,V)\leq\frac{|\langle\nabla D(q),V\rangle|^{2}}{4D(q)}+\mathcal{O}\Bigl(\bigl\|V\bigr\|^{3}\Bigr).
   \end{align*}
   The homogeneity in $V$ then implies that \eqref{E:main_ineq2} holds.
   \end{proof}
   
   \begin{remark}\label{R:convexoutside} 
   Under the hypothesis of Theorem \ref{T:convex}, $\delta(z)$ is also convex for $z\in U\cap\Omega^c$.
   The same initial part of the proof above is used; however  $a>0$ when $q\in U\cap\Omega^c$.
   Notice that the conclusion above \eqref{E:D2a}
   can be improved: Taylor's theorem actually yields
   \begin{align*}
     h(V',t)=r(0)+\langle\nabla r(0),\widetilde{V}\rangle
     +\frac 12\,\mathcal{H}_{r(\alpha)}\left(\widetilde{V},\widetilde{V}\right)
   \end{align*}
  for some point $\alpha$ on the line segment connecting the origin and the point $\widetilde{V}$. It follows from the proof of Proposition 4.1 in \cite{HerMcN09} that $\mathcal{H}_{r(\alpha)}(\widetilde{V},\widetilde{V})\geq 0$. Therefore, 
   $h(V',t)$ is non-negative, and the distance of the point $q+V$ to $b\Omega$ is larger or equal to its distance to the 
  hyperplane $\{x\in\mathbb{R}^{n} : x_{n}=0\}$. But the latter is attained at the point $\widetilde{V}$.  It follows that
  \begin{align*}
    D(q+V)&\geq\|(V',a+V_{n})-\widetilde{V}\|^{2}=\|a+s\|^{2}\\
    &=D(q)+\langle\nabla D(q),V\rangle+\frac{|\langle\nabla D(q),V\rangle|^{2}}{4D(q)}.
  \end{align*}
  This yields, by repeating the arguments in the proof of Theorem \ref{T:convex}, that $\delta$ is convex on $\Omega^{c}\cap U$. 
  \end{remark}
 \section{Intermediate positivity conditions}\label{S:intermediate}
 
 In the previous two sections, hypotheses on the Hessians and tangent spaces were ``matched'' with respect to the real or complex structure: $\mathcal{H}_\delta \geq 0$ on 
 ${\mathbb R}T(b\Omega)$ in Theorem \ref{T:convex} and $\mathcal{L}_\delta \geq 0$ on ${\mathbb C}T(b\Omega)$ in Theorem \ref{T:Oka}. In this section, we study ``mixed'' situations.
 
\subsection*{Non-negativity  of $\mathcal{H}_{\delta(p)}$ on $\mathbb{C}T_{p}(b\Omega)$}

\begin{definition}\label{D:cconvex}
Let $\Omega\subset\mathbb{C}^n$ be a smoothly bounded open set, $p_0\in b\Omega$,
and $r$ is a local defining function for $\Omega$ in a neighborhood of $p_{0}$. Then
$\Omega$ is $\mathbb{C}$-convex near $p_{0}$ if
\begin{align*}
\mathcal{H}_{r(p)}\big(V,V\big)\geq 0\qquad \forall\sjump p\in U\cap b\Omega, \sjump V\in\mathbb{C}T_p(b\Omega)
\end{align*}
for some neighborhood $U$ containing $p_{0}$.
\end{definition}

As with the conditions in Definition \ref{D:convex_psc_domains}, $\mathbb{C}$-convexity is independent of the choice of local defining function as well as invariant under a $\mathbb{C}$-affine coordinate change.

A convex domain is clearly $\mathbb{C}$-convex , since ${\mathbb C}T(b\Omega)\subset {\mathbb R}T(b\Omega)$. Also, the displayed equation below \eqref{E:Q_complex} shows that a $\mathbb{C}$-convex domain is pseudoconvex. Not suprisingly, a result intermediate to Theorems \ref{T:Oka} and \ref{T:convex} holds for $\mathbb{C}$-convex domains.

 \begin{theorem}\label{T:cconvex}
 Let $\Omega$ be a smoothly bounded, $\mathbb{C}$-convex domain in $\mathbb{C}^n$. There exists a neighborhood $U$ of $b\Omega$ such 
 that 
 \begin{align}\label{E:ccvxOka}
   \mathcal{H}_{\delta(z)}(V,V)\geq\frac{\left|\langle\nabla\delta(z),iV\rangle \right|^{2}}{\delta(z)}
   \sjump\qquad\forall\sjump z\in U\cap\Omega, \sjump V\in\mathbb{C}^{n}.
 \end{align}
\end{theorem}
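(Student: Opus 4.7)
The plan is to emulate the proofs of Theorems \ref{T:Oka} and \ref{T:convex}, using the $\mathbb{C}$-convexity hypothesis in precisely the place where the first used pseudoconvexity and the second used convexity. As in the convex case, it is easier to work with $D=\delta^{2}$; using the identity \eqref{E:HessDHessdelta} together with the negativity of $\delta$ on $\Omega$, the conclusion \eqref{E:ccvxOka} is equivalent to
\begin{align*}
\mathcal{H}_{D(z)}(V,V)\leq\frac{|\langle\nabla D(z),V\rangle|^{2}+|\langle\nabla D(z),iV\rangle|^{2}}{2D(z)}\qquad\forall\sjump z\in U\cap\Omega,\sjump V\in\mathbb{C}^{n}.
\end{align*}
This is the target inequality I would aim to establish.

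Fix $q\in U\cap\Omega$ and normalize coordinates via Proposition \ref{P:distance} so that $b(q)=0$, $q=(0',a)$ with $a<0$, and $\mathbb{R}T_{0}(b\Omega)=\{\mathrm{Re}\,z_{n}=0\}$; write $b\Omega$ locally as the graph $r(z)=\mathrm{Re}\,z_{n}+h(z',\mathrm{Im}\,z_{n})=0$ with $h(0)=0,\ \nabla h(0)=0$ via the Implicit Function Theorem, as in the proofs of Theorems \ref{T:Oka} and \ref{T:convex}. The key choice -- the one driven by the hypothesis -- is the test point on $b\Omega$ used to bound $D(q+V)$ from above. Because $\mathbb{C}$-convexity only delivers positivity of $\mathcal{H}_{r(0)}$ on the \emph{complex} tangent space, I would use the Oka-style test point $(V',-h(V',0))\in b\Omega$, so that the auxiliary vector $\widetilde{V}=(V',0)$ lies in $\mathbb{C}T_{0}(b\Omega)$; Taylor's theorem and the hypothesis then yield $h(V',0)\geq -C\|V\|^{3}$.

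Expanding $\|(V',a+V_{n})-(V',-h(V',0))\|^{2}$ with $V_{n}=s+it$ gives, to second order,
\begin{align*}
D(q+V)\leq a^{2}+2as+s^{2}+t^{2}+2a\,h(V',0)+\mathcal{O}(\|V\|^{3}).
\end{align*}
Since $a<0$, the term $2a\,h(V',0)$ is absorbed into the remainder. The payoff is to recognize $s^{2}$ and $t^{2}$ as gradient data: in the normalized coordinates $\langle\nabla D(q),V\rangle=2as$ and $\langle\nabla D(q),iV\rangle=-2at$, so
\begin{align*}
s^{2}+t^{2}=\frac{|\langle\nabla D(q),V\rangle|^{2}+|\langle\nabla D(q),iV\rangle|^{2}}{4D(q)}.
\end{align*}
Comparing the resulting upper bound with the second-order Taylor expansion of $D$ about $q$ and invoking homogeneity in $V$ yields the target inequality above.

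The main conceptual obstacle is choosing the right test point on $b\Omega$: it must be close enough to $q+V$ to give a sharp second-order bound on $D(q+V)$ while simultaneously producing an auxiliary vector $\widetilde{V}$ lying in the subspace on which the hypothesis provides positivity. Forcing $\widetilde{V}\in\mathbb{C}T_{0}(b\Omega)$ (rather than only in $\mathbb{R}T_{0}(b\Omega)$, as in the convex case) means that $\mathrm{Im}\,V_{n}$ cannot be absorbed into the test point, and it reappears as the correction $|\langle\nabla\delta,iV\rangle|^{2}/\delta$ in the final estimate -- exactly the intermediate feature of Theorem \ref{T:cconvex} between Theorem \ref{T:convex} and Theorem \ref{T:Oka}.
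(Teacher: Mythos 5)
Your proposal is correct and follows essentially the same route as the paper's proof: same target inequality on $\mathcal{H}_{D}$, same Oka-style test point $(V',-h(V',0))$ putting $\widetilde V=(V',0)$ in $\mathbb{C}T_0(b\Omega)$, same identification of $s^2+t^2$ with the two gradient terms, and the same comparison against the single Taylor expansion of $D$ about $q$ followed by homogeneity in $V$.
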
 
As mentioned in the introduction, Theorem \ref{T:cconvex} is proved in \cite{AndPasSig_ccvx_book}, see the implication (ii) to (iii) of Theorem 2.5.18 therein (there is a notational difference between our paper and \cite{AndPasSig_ccvx_book} -- compare \eqref{E:H_complex} and the first displayed equation on pg.~60 in \cite{AndPasSig_ccvx_book}). 
\begin{proof}[Proof of Theorem \ref{T:cconvex}]
  As in the proofs of Theorems \ref{T:Oka} and \ref{T:convex} work with the function $D(z)=(\delta(z))^{2}$. Note first that for any vector $V\in\mathbb{C}^{n}$
  \begin{align*}
    \left|\langle\nabla\delta(z),V\rangle \right|^{2}
    =
    \frac{\left|\langle\nabla D(z),V\rangle \right|^{2}}{4D(z)}.
  \end{align*}
  It then follows from \eqref{E:HessDHessdelta} that \eqref{E:ccvxOka} is equivalent to
  \begin{align}\label{E:ccvxD}
    \mathcal{H}_{D(z)}(V,V)\leq 
    \frac{\left|\langle\nabla D(z),V\rangle \right|^{2}}{2D(z)}+\frac{\left|\langle\nabla D(z),iV\rangle \right|^{2}}{2D(z)}
   \qquad\sjump\forall\sjump z\in U\cap\Omega,\sjump V\in\mathbb{C}^{n}.
  \end{align}
  To prove \eqref{E:ccvxD}, proceed as in the proof of Theorem \ref{T:Oka}, starting below \eqref{E:main_ineq1}.
  Take $q=(0',a)$ for $a<0$ and $V=(V',V_{n})$ with $V_{n}=s+it$. Choose again the boundary point  $(V',-h(V',0))$ to obtain an upper bound on $D(q+V)$:
    \begin{align*}
    D(q+V)&\leq a^2+2a\cdot h(V',0)+ 2a s +s^{2}+t^{2}+ {\mathcal O}\left(\|V\|^3\right).
  \end{align*}
  Since $(V',0)$ is in $\mathbb{C}T_{0}(b\Omega)$, it follows from \eqref{E:Taylorh} and the hypothesis of $\mathbb{C}$-convexity that $h(V',0)\geq-C\|V\|^{3}$ for some $C>0$. Therefore
  \begin{align*}
    D(q+V)&\leq a^{2}+ 2a s +s^{2}+t^{2}+ {\mathcal O}\left(\|V\|^3\right)\\
    &=D(q)+\left\langle\nabla D(q),V\right\rangle+
    \frac{\left|\langle\nabla D(z),V\rangle \right|^{2}}{4D(z)}+\frac{\left|\langle\nabla D(z),iV\rangle \right|^{2}}{4D(z)}  + {\mathcal O}\left(\|V\|^3\right).
  \end{align*}
  Using \eqref{E:D3a}, it then follows that
  \begin{align*}
    \frac{1}{2} H_{D(q)}(V,V)\leq 
    \frac{\left|\langle\nabla D(z),V\rangle \right|^{2}}{4D(z)}+\frac{\left|\langle\nabla D(z),iV\rangle \right|^{2}}{4D(z)} 
    + {\mathcal O}\left(\|V\|^3\right),
  \end{align*}
  which implies \eqref{E:ccvxD}.
\end{proof}

\medskip
\subsection*{Non-negativity  of $\mathcal{L}_{\delta(p)}$ on $\mathbb{R}T_{p}(b\Omega)$} 

Finally, we turn to the case of non-negativity of the \textit{complex} Hessian of a defining function on the \textit{real} tangent space.
Unlike the previous conditions, this one is not independent of the choice of defining function.  We shall only consider this condition on $\delta$ as our method of proof is fine-tuned to this defining function.

It is elementary that this positivity spreads to arbitrary directions:

\begin{remark}\label{R:rotation}
 If $\mathcal{L}_{\delta(z)}(V,V)\geq 0$ for all $z\in b\Omega$ and $V\in\mathbb{R}T_{z}(b\Omega)$, then
  $\mathcal{L}_{\delta(z)}(W,W)\geq 0$ for all $z\in b\Omega$ and $W\in\mathbb{C}^{n}$. This can be seen if, e.g., the coordinates in the proof of Theorem \ref{T:convex} are used. Then, for $W\in\mathbb{C}^{n}$, choose $\theta\in[0,2\pi)$ such that $\text{Re}(e^{i\theta}W_{n})=0$. The complex Hessian of $\delta$ is invariant under such rotations and
  $V:=e^{i\theta}W\in\mathbb{R}T_{0}(b\Omega)$.
\end{remark}

Moreover, this positivity spreads off $b\Omega$:

\begin{theorem}\label{T:psh}
  Let $\Omega$ be a smoothly bounded domain in $\mathbb{C}^{n}$. Suppose that $\mathcal{L}_{\delta(z)}(V,V)\geq 0$ for all $z\in b\Omega$ and $V\in\mathbb{R}T_{z}(b\Omega)$. Then there exists a neighborhood $U$ of $b\Omega$ such that $\delta$ is plurisubharmonic on $U\cap\Omega$, i.e., $\mathcal{L}_{\delta}(z)(V,V)\geq 0$ for all $z\in U\cap \Omega$ and $V\in\mathbb{C}^{n}$.
\end{theorem}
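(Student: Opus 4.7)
The plan is to follow the template used in the proofs of Theorems~\ref{T:Oka}, \ref{T:convex}, and \ref{T:cconvex}. Set $D=\delta^2$. The identity
\begin{equation*}
\mathcal{L}_{D(z)}(V,V)=\frac{|\langle \partial D(z),V\rangle|^2}{2\,D(z)}+2\delta(z)\,\mathcal{L}_{\delta(z)}(V,V),
\end{equation*}
combined with $\delta<0$ on $\Omega$, converts the desired conclusion $\mathcal{L}_{\delta(z)}(V,V)\ge0$ into the equivalent inequality
\begin{equation*}
\mathcal{L}_{D(z)}(V,V)\le\frac{|\langle \partial D(z),V\rangle|^2}{2\,D(z)},\qquad z\in U\cap\Omega,\ V\in\mathbb{C}^n.
\end{equation*}
Fix $q\in U\cap\Omega$ and apply Proposition~\ref{P:distance} so that $b(q)=0$, $q=(0',a)$ with $a<0$, and $\mathbb{R}T_0(b\Omega)=\{\text{Re}\,z_n=0\}$; represent $b\Omega$ near $0$ as the graph $\text{Re}\,z_n=-h(z',\text{Im}\,z_n)$.

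Writing $V=(V',V_n)$ with $V_n=s+it$, I will bound $D(q+V)$ using the boundary point $(V',-h(V',t)+it)$ of Theorem~\ref{T:convex}, and $D(q+iV)$ analogously with $(iV',-h(iV',s)+is)$. Adding the two upper bounds and comparing with the Taylor expansion of $D(q+V)+D(q+iV)$ as in \eqref{E:D2}--\eqref{E:D3} yields
\begin{equation*}
2\,\mathcal{L}_{D(q)}(V,V)\le 2a\bigl[h(V',t)+h(iV',s)\bigr]+s^2+t^2+\mathcal{O}(\|V\|^3).
\end{equation*}
Because $s^2+t^2=|\langle \partial D(q),V\rangle|^2/D(q)$ and $a<0$, after the usual homogeneity argument in $V$ the proof reduces to showing
\begin{equation*}
h(V',t)+h(iV',s)\ge\mathcal{O}(\|V\|^3).
\end{equation*}

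The new step---where the specific hypothesis on $\delta$, rather than on an arbitrary defining function, must enter---is to identify $h$ with $\delta$ itself. Using the boundary relation $\delta(V',-h(V',t)+it)=0$ together with the eikonal consequence $\delta_{x_{2n-1}x_\ell}(0)=0$ for all $\ell$ (obtained by differentiating $|\nabla\delta|^2=1$ and using $\nabla\delta(0)=e_{2n-1}$), I will show
\begin{equation*}
h(V',t)=\delta(V',it)+\mathcal{O}(\|V\|^4),\qquad h(iV',s)=\delta(iV',is)+\mathcal{O}(\|V\|^4).
\end{equation*}
Setting $\widetilde V_1:=(V',it)$ and $\widetilde V_2:=(iV',is)$, both of which lie in $\mathbb{R}T_0(b\Omega)$, Taylor expansion of $\delta$ at $0$ (using $\delta(0)=0$ and $\langle\nabla\delta(0),\widetilde V_k\rangle=0$) gives
\begin{equation*}
\delta(\widetilde V_k)=\mathcal{Q}_{\delta(0)}(\widetilde V_k,\widetilde V_k)+\mathcal{L}_{\delta(0)}(\widetilde V_k,\widetilde V_k)+\mathcal{O}(\|V\|^3),\qquad k=1,2.
\end{equation*}

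The main obstacle is the algebraic collapse of the resulting sum. Unlike Oka's proof, where $i$-rotation acts on complex-tangent vectors and cleanly cancels the $\mathcal{Q}$-terms, here the nonzero $V_n$-components of $\widetilde V_1$ and $\widetilde V_2$ prevent such a cancellation. To overcome this I will use the complex form of the eikonal identity: differentiating $\sum_j\delta_{z_j}\delta_{\bar z_j}=1/4$ and evaluating at $0$ yields
\begin{equation*}
\delta_{z_jz_n}(0)=-\delta_{z_j\bar z_n}(0)\qquad\text{for every }j.
\end{equation*}
A direct bookkeeping computation based on this identity will show that the $\mathcal{Q}$- and $\mathcal{L}$-contributions to the sum rearrange into
\begin{equation*}
h(V',t)+h(iV',s)=2\,\mathcal{L}_{\delta(0)}(V,V)+\mathcal{O}(\|V\|^3).
\end{equation*}
By Remark~\ref{R:rotation}, the hypothesis on $\mathbb{R}T_0(b\Omega)$ extends to $\mathcal{L}_{\delta(0)}(V,V)\ge0$ for every $V\in\mathbb{C}^n$, yielding the required lower bound on $h(V',t)+h(iV',s)$ and completing the proof.
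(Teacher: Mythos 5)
Your proposal is correct and follows the same strategy as the paper's proof: reduce to the inequality $\mathcal{L}_{D}\leq|\langle\partial D,\cdot\rangle|^2/(2D)$, bound $D(q+V)$ and $D(q+iV)$ from above via well-chosen boundary points on the graph of $h$, subtract the Taylor expansion, and reduce to a positivity statement for $h(V',t)+h(iV',\cdot)$ that is resolved by the eikonal relation $|\nabla\delta|\equiv 1$ together with Remark~\ref{R:rotation}. The differences are cosmetic: the paper first invokes Remark~\ref{R:rotation} to normalize to $\mathrm{Re}\,V_n=0$ and then chooses the boundary point $(iV',-h(iV',0))$, whereas you keep $V_n=s+it$ general and use the symmetric choice $(iV',-h(iV',s)+is)$; the paper phrases the eikonal input as $\mathcal{H}_{\delta(0)}(\cdot,(0',1))=0$ and passes through $r_{x_jx_k}(0)=\delta_{x_jx_k}(0)$, while you phrase it as $\delta_{z_jz_n}(0)=-\delta_{z_j\bar z_n}(0)$ and relate $h$ to $\delta$ directly --- both yield $\mathcal{H}_{\delta(0)}(\widetilde V_1,\widetilde V_1)+\mathcal{H}_{\delta(0)}(\widetilde V_2,\widetilde V_2)=4\mathcal{L}_{\delta(0)}(V,V)$.
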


\begin{proof}
  Again,  use the function $D(z)=(\delta(z))^{2}$, and note that
  \begin{align}\label{E:complexHessD}
    \mathcal{L}_{D(z)}(W,W)=\frac{\left|\langle\partial D(z),W \rangle\right|^2}{2D(z)}+2\delta(z)\mathcal{L}_{\delta(z)}(W,W)
  \end{align}
  for $z$ near $b\Omega$ and $W\in\mathbb{C}^n$. Thus, to prove Theorem \ref{T:psh} it suffices to show that there exists a neighborhood $U$ of $b\Omega$ such that
  \begin{align}\label{E:Dpshclaim}
     \mathcal{L}_{D(z)}(W,W)\leq\frac{\left|\langle\partial D(z),W\rangle \right|^2}{2D(z)}\qquad\sjump\forall\sjump z\in U\cap\Omega,\sjump W\in\mathbb{C}^{n}.
  \end{align}
  
  Proceed as in the proof of Theorem \ref{T:convex}, starting below \eqref{E:main_ineq2}. 
   By Remark \ref{R:rotation}, it suffices to
   prove  \eqref{E:Dpshclaim} for vectors that are of the form $V=(V',it)$ for $V'\in\mathbb{C}^{n-1}$ and $t\in\mathbb{R}$.  As in the proof of 
   Theorem \ref{T:convex} , take $q=(0',a)$ for $a<0$, and choose the boundary point
   $(V',-h(V',t)+it)$ to obtain an upper bound for $D(q+V)$:
    \begin{align}\label{E:D(q+V)psh} 
      D(q+V)&\leq \left\|(V',a+V_{n})-(V',-h(V',t)+it) \right\|^{2} \notag \\
       &= D(q)+2a\cdot h(V',t)
    +\mathcal{O}\Bigl(\bigl\|V\bigr\|^{3} \Bigr).
   \end{align}
   
   Next, choose the boundary point $(iV',-h(iV',0))$ to obtain an upper bound for $D(q+iV)$:
  \begin{align}\label{E:D(q+iV)psh}
    D(q+iV)
    &\leq \left\|(iV',a+iV_{n})-(iV',-h(iV',0)) \right\|^{2}
    \notag\\
    &= D(q)-2\text{Im}\left(\langle\partial D(q),V\rangle\right)+2a\cdot h(iV',0)+
    \frac{\left|\langle\partial D(q),V \rangle\right|^{2}}{D(q)}
    +\mathcal{O}\Bigl(\bigl\|V\bigr\|^{3} \Bigr).
  \end{align}
  Suppose (temporarily) that there exists some constant $C>0$ such that
  \begin{align}\label{E:intermediatepshclaim}
    h(V',t)+h(iV',0)\geq -C\|V\|^{3}.
  \end{align}
  Then,  adding \eqref{E:D(q+iV)psh} to \eqref{E:D(q+V)psh}  would yield
  \begin{align*}
       S&=D(q+V)+D(q+iV)\\
      & \leq 2D(q)-2 \text{Im}\left(\langle\partial D(q),V\rangle\right)
       +  \frac{\left|\langle\partial D(q),V \rangle\right|^{2}}{D(q)} +\mathcal{O}\Bigl(\bigl\|V\bigr\|^{3} \Bigr).
     \end{align*}
     Using \eqref{E:D3} for $S$ and the fact that $\text{Re}\left(\langle\partial D(q),V\rangle\right)=0$, it then would follow that
     \begin{align*}
      2\mathcal{L}_{D(q)}(V,V)\leq  \frac{\left|\langle\partial D(q),V \rangle\right|^{2}}{D(q)} +\mathcal{O}\Bigl(\bigl\|V\bigr\|^{3} \Bigr),
     \end{align*}
     which implies \eqref{E:Dpshclaim}. 
     
     Thus it remains to show that \eqref{E:intermediatepshclaim} holds. 
     For that write $\widehat{V}=(iV',0)$, so that $h(iV',0)=r(\widehat{V})$ (and $h(V',t)=r(V)$). Then
     Taylor's Theorem gives
     \begin{align*}
       h(V',t)+h(iV',0)=\frac{1}{2}\left(\mathcal{H}_{r(0)}\bigl(V,V\bigr)+\mathcal{H}_{r(0)}\bigl(\widehat{V},\widehat{V}\bigr) \right)
       +\mathcal{O}\left(\|V\|^{3} \right).
     \end{align*}
  Since $\delta_{x_{j}x_{k}}(0)=r_{x_{j}x_{k}}(0)$  (see for instance part (i) of Remark 4.2 in \cite{HerMcN09} with $r=\delta$ there), it suffices to show that
  \begin{align*}
    \mathcal{H}_{\delta(0)}(V,V)+\mathcal{H}_{\delta(0)}(\widehat{V},\widehat{V})\geq 0.
 \end{align*}
 As $\widehat{V}=iV+(0',t)$, the bilinearity of $\mathcal{H}$ yields
  \begin{align*}
    \mathcal{H}_{\delta(0)}(\widehat{V},\widehat{V})=
    \mathcal{H}_{\delta(0)}(iV,iV)
    +2\mathcal{H}_{\delta(0)}\bigl(iV,(0',t)\bigr)
    +\mathcal{H}_{\delta(0)}\bigl((0',t),(0',t) \bigr).
  \end{align*}
  However, the fact that $\sum_{j=1}^{2n}\left|\delta_{x_{j}}\right|^{2}=1$ holds in a neighborhood of $b\Omega$ implies that
 $$\mathcal{H}_{\delta(0)}\bigl(\,.\,,(0',1)\bigr)=0,$$
   see, e.g.,  (5.5) in \cite{HerMcN09} for details.
   Thus  $\mathcal{H}_{\delta(0)}(\widehat{V},\widehat{V})=\mathcal{H}_{\delta(0)}(iV,iV)$,
  so that by  \eqref{E:H_complex}
  \begin{align*}
    \mathcal{H}_{\delta(0)}(V,V)+\mathcal{H}_{\delta(0)}(\widehat{V},\widehat{V})
    =
    4\mathcal{L}_{\delta(0)}\bigl(V,V \bigr)
   \geq 0,
  \end{align*}
  since the complex  Hessian of $\delta$ on the boundary is non-negative definite on the real tangent space.
  \end{proof}
  
  \medskip
  
\subsection*{Non-negativity  of $\mathcal{L}_{\delta(p)}$ on cones in $\mathbb{R}T_{p}(b\Omega)$}

 One may also consider non-negativity of the real or complex Hessian of $\delta$ on cones of vectors, contained in the real tangent space, whose axes are the complex tangent space. We shall only consider the non-negativity of $\mathcal{L}_{\delta}$ on such cones here, but mention $\mathcal{H}_{\delta}\geq 0$ could be considered as well and a result analogous to Theorem \ref{T:gamma} below obtained for that hypothesis.
  
  \begin{definition}
   Let $\Omega\subset\mathbb{C}^{n}$ be a smoothly bounded open set, $p\in b\Omega$. Let  $r$ be a  defining function for $\Omega$ in a neighborhood of $p$ and $\gamma\in[0,\infty)$. Then 
   \begin{align*}
     \mathbb{R}T_{p}^{\gamma}(b\Omega)=\left\{V\in\mathbb{R}T_{p}(b\Omega): 
     \frac{\left|\langle i\nabla r(p),V\rangle \right|}{\|\nabla r(p)\|}
     \leq\gamma
     \left\|
     V- \frac{\langle i\nabla r(p),V\rangle i\nabla r(p)}{\|\nabla r(p)\|^{2}}
     \right\|
     \right\}
   \end{align*}
  \end{definition}
  
 Note that the definition of the cone $\mathbb{R}T_{p}^{\gamma}(b\Omega)$ is independent of the choice of defining function. Also, 
  $\mathbb{R}T_{p}^{\gamma}(b\Omega)$ is invariant under $\mathbb{C}$-affine coordinate changes that are compositions of translations and rotations. Furthermore,  
\begin{itemize}
\item[(i)]  $\mathbb{R}T_{p}^{0}(b\Omega)$ equals $\mathbb{C}T_{p}(b\Omega)$,
\item[(ii)] $\lim_{\gamma\to\infty}\mathbb{R}T_{p}^{\gamma}(b\Omega)$ equals $\mathbb{R}T_{p}(b\Omega)$.
\end{itemize}

  \begin{definition}
    Let $\Omega\subset\mathbb{C}^{n}$ be a smoothly bounded open set, $p_{0}\in b\Omega$, and $U$ a neighborhood of $p_{0}$. Then
    $\delta$ is $\gamma$-plurisubharmonic on $U\cap b\Omega$ if
    \begin{align*}
      \mathcal{\mathcal{L}}_{\delta(p)}(V,V)\geq 0\qquad\sjump\forall\sjump p\in U\cap b\Omega,\sjump V\in\mathbb{R}T_{p}^{\gamma}(b\Omega).
    \end{align*}
  \end{definition}
  Note that the condition of $\gamma$-plurisubharmonicity  of $\delta$ is a condition intermediate to pseudoconvexity of $\Omega$ ($\gamma=0$) and plurisubharmonicity of $\delta$ on $b\Omega$ ($\gamma=\infty$). The following theorem establishes that the complex Hessian of $\delta$ then inherits non-negativity intermediate to the results of Theorem \ref{T:Oka} and Theorem \ref{T:psh}.
  
    \begin{theorem}\label{T:gamma}
    Let $\Omega\subset\mathbb{C}^{n}$ be a smoothly bounded domain. Suppose $\delta$ is  $\gamma$-plurisubharmonic on $b\Omega$ for some $\gamma>0$. 
    Let $\eta=1-2/(2+\gamma^{2})$. Then there exists a neighborhood $U$ of $b\Omega$ such that 
    \begin{align*}
      \mathcal{L}_{-(-\delta)^{\eta}(z)}(V,V)\geq 0\qquad\sjump\forall\sjump z\in U\cap\Omega,
      \sjump V\in\mathbb{C}^{n}.
    \end{align*}
  \end{theorem}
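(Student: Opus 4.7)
The plan is to adapt the strategies of Theorems \ref{T:Oka} and \ref{T:psh} to the intermediate setting. First, a direct computation shows that plurisubharmonicity of $-(-\delta)^\eta$ on $U\cap\Omega$ is equivalent to
\begin{align*}
\mathcal{L}_{D(z)}(V,V)\leq\frac{(2-\eta)\,|\langle\partial D(z),V\rangle|^2}{2D(z)},\quad z\in U\cap\Omega,\ V\in\mathbb{C}^n,
\end{align*}
where $D=\delta^2$. For $\eta=1-2/(2+\gamma^2)$ the coefficient $(2-\eta)/2$ interpolates between $1$ (Oka, $\gamma=0$) and $1/2$ (Theorem \ref{T:psh}, $\gamma=\infty$).

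After fixing $q\in U\cap\Omega$, applying the coordinate change of Proposition \ref{P:distance}, and reducing to $V=(V',it)$ by the rotation argument of Remark \ref{R:rotation}, estimate $D(q+V)+D(q+iV)$ from above using the boundary points $(V',-h(V',t)+it)$ and $(iV',-h(iV',0))$ as in the proof of Theorem \ref{T:psh}. The identity
\begin{align*}
h(V',t)+h(iV',0)=2\mathcal{L}_{\delta(0)}\bigl((V',it),(V',it)\bigr)+\mathcal{O}(\|V\|^3),
\end{align*}
which hinges on $\mathcal{H}_{\delta(0)}\bigl((0',t),\cdot\bigr)=0$, combined with the Taylor expansion of the same sum and homogeneity in $V$, yields the exact inequality
\begin{align*}
2\mathcal{L}_{D(q)}(V,V)\leq t^2-4|a|P(t),\qquad P(t):=\mathcal{L}_{\delta(0)}\bigl((V',it),(V',it)\bigr).
\end{align*}

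The crux is to bound $P(t)$ from below. If $|t|\leq\gamma\|V'\|$, then $(V',it)\in\mathbb{R}T_0^\gamma(b\Omega)$, so by hypothesis $P(t)\geq 0$, and $2\mathcal{L}_{D(q)}(V,V)\leq t^2\leq(2-\eta)t^2$ directly. If $|t|>\gamma\|V'\|$, write $P(t)=Ct^2+2\text{Im}(\beta)t+A$ with $C=\delta_{z_n\bar z_n}(0)$, $\beta=\sum_{j<n}\delta_{z_j\bar z_n}(0)V_j'$, and $A=\mathcal{L}_{\delta(0)}\bigl((V',0),(V',0)\bigr)$; using the elementary bounds $|\beta|\leq K_\beta\|V'\|$ and $|A|\leq K_A\|V'\|^2$ together with $\|V'\|\leq|t|/\gamma$ yields $|P(t)|\leq K^*t^2$, where $K^*=|C|+2K_\beta/\gamma+K_A/\gamma^2$ is bounded uniformly over $b\Omega$ by compactness. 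Choosing the neighborhood $U$ so that $|\delta(z)|\leq 1/(2K^*(2+\gamma^2))$ on $U$ produces $-4|a|P(t)\leq 4|a|K^*t^2\leq 2t^2/(2+\gamma^2)=(1-\eta)t^2$, completing $2\mathcal{L}_{D(q)}(V,V)\leq(2-\eta)t^2$. The principal obstacle is this off-cone case, where the hypothesis gives no direct control on $P(t)$ and one must rely on the pointwise smoothness of $\delta$ on $b\Omega$; this forces $U$ to depend on $\gamma$ and the geometry of $b\Omega$. The specific exponent $\eta=1-2/(2+\gamma^2)$ emerges as the balance point $(1-\eta)=2/(2+\gamma^2)$ of slack needed in the off-cone case.
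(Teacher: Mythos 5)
Your proof is correct, and it takes a genuinely different route from the paper's. The paper handles the off--cone case ($|t|>\gamma\|V'\|$) by \emph{optimizing} the choice of reference boundary points: it replaces the fixed points $(V',-h(V',t)+it)$ and $(iV',-h(iV',0))$ of Theorem \ref{T:psh} with $(W',-h(W',c)+ic)$ and $(iW',-h(iW',0))$, and then minimizes $f(W',c)=2\|V'-W'\|^{2}+(t-c)^{2}$ subject to the constraint $(W',ic)\in\mathbb{R}T_{0}^{\gamma}(b\Omega)$. The resulting minimizer lies on the cone boundary, so the hypothesis can be invoked at $(W_{0}',ic_{0})$ to discard the $h$-terms as $\mathcal{O}(\|V\|^{3})$, and the exponent $\eta=1-2/(2+\gamma^{2})$ emerges from the minimal value $f(W_{0}',c_{0})\leq 2t^{2}/(2+\gamma^{2})$. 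You instead keep the fixed T.\ref{T:psh} boundary points, use the Taylor identity $h(V',t)+h(iV',0)=2\mathcal{L}_{\delta(0)}(V,V)+\mathcal{O}(\|V\|^{3})$ to produce the clean bound $2\mathcal{L}_{D(q)}(V,V)\leq t^{2}-4|a|P(t)$ (which, after homogeneity, is unconditional), and then handle the off--cone case by the uniform Hessian estimate $|P(t)|\leq K^{*}t^{2}$, at the cost of shrinking $U$ so that $|\delta|$ is small. Both yield the theorem; the paper's variational device has the advantage that the resulting neighborhood $U$ is just the tube where $\delta$ is smooth (independent of $\gamma$ and the size of the Hessian of $\delta$), whereas your tube has width $\lesssim 1/(K^{*}(2+\gamma^{2}))$, which degenerates as $\gamma\to\infty$ and so does not recover Theorem \ref{T:psh} as a clean limit. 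On the other hand, your argument isolates the elegant intermediate inequality $\mathcal{L}_{\delta(q)}(V,V)\geq\mathcal{L}_{\delta(b(q))}(V,V)$ for $V$ with $\mathrm{Re}\,V_{n}=0$, which is interesting in its own right and is not made explicit in the paper.
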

  \begin{proof}
   Use the function $D(z)=\left(\delta(z)\right)^{2}$. First note 
    \begin{align*}
      \mathcal{L}_{-(-\delta)^{\eta}(z)}(V,V)&=\eta(-\delta(z))^{\eta-2}\Bigl((-\delta(z))
      \mathcal{L}_{\delta(z)}(V,V)+(1-\eta)\left|\langle\partial\delta(z),V\rangle \right|^{2}
      \Bigr)\\
      &=
      \eta(-\delta(z))^{\eta-2}\Bigl((-\delta(z))
      \mathcal{L}_{\delta(z)}(V,V)+(1-\eta)\frac{\left|\langle\partial D(z),V\rangle \right|^{2}}{4D(z)}
      \Bigr).
    \end{align*}
    It follows from \eqref{E:complexHessD} that it suffices to show that there exists a neighborhood $U$ of $b\Omega$ such that
    \begin{align}\label{E:claimDDF}
      \mathcal{L}_{D(z)}(V,V)&\leq (2-\eta)\frac{\left|\langle\partial D(z),V\rangle \right|^{2}}{2D(z)}\notag\\
            &=\left(1+\frac{2}{2+\gamma^{2}}\right)\frac{\left|\langle\partial D(z),V\rangle \right|^{2}}{2D(z)}
    \end{align}
    for all $z\in U\cap\Omega$ and $V\in\mathbb{C}^{n}$.
    
    Proceed as in the proof of  Theorem \ref{T:convex}, starting below \eqref{E:main_ineq2}. In particular, let $q=(0',a)$ for $a<0$. 
    By the arguments in Remark \ref{R:rotation}, it suffices to prove \eqref{E:claimDDF} at $z=q$ for vectors $V=(V',V_{n})$ with
    $V_{n}=it$ for $t\in\mathbb{R}_{0}^{+}$. 
    
    Let us first suppose that $V\in\mathbb{R}T_{0}^{\gamma}(b\Omega)$. Then, since $\delta$ is $\gamma$-plurisubharmonic on $b\Omega$, the proof of Theorem \ref{T:psh} is applicable so that 
    \eqref{E:Dpshclaim} holds (which implies \eqref{E:claimDDF} for any $\gamma\geq 0$).
    
    Next, suppose that $V\notin\mathbb{R}T_{0}^{\gamma}(b\Omega)$, i.e., $|t|>\gamma\|V'\|$. Since $(W',-h(W',c)+ic)$ is a boundary point for any
     point $(W',ic)$ sufficiently close to the origin, it follows that
     \begin{align}\label{E:DqVDF}
       D(q+V)&\leq
       \left\|
       (V', a+V_{n})-(W',-h(W',c)+ic)
       \right\|^{2}\notag\\
       &=(a+h(W',c))^{2}+\left\|V'-W'\right\|^{2}+(t-c)^{2}.
     \end{align}
     Similarly, since $(iW',-h(iW',0))$ is a boundary point, it follows that
     \begin{align}\label{E:DqiVDF}
       D(q+iV)&\leq\left\|(iV',a+iV_{n})-\left(iW',-h(iW',0)\right) \right\|^{2}\notag\\
       &=
       \left(a-t+h(iW',0)\right)^{2}+\|V'-W'\|^{2}.
     \end{align}
     Adding \eqref{E:DqiVDF} to \eqref{E:DqVDF} gives
     \begin{align*}
       S&=D(q+V)+D(q+iV)\\
       &\leq
       (a+h(W',c))^{2}+ \left(a-t+h(iW',0)\right)^{2}+2\left\|V'-W'\right\|^{2}+(t-c)^{2}.
     \end{align*}
     Minimizing the function $f(W',c)=2\|V'-W'\|^{2}+(t-c)^{2}$ subject to the constraint   $|c|=\gamma\|W'\|$
     (so that $(W',ic)\in\mathbb{R}T_{0}^{\gamma}(b\Omega)$ holds), yields the minimal value
     \begin{align*}
       f(W_{0}',c_{0})=\frac{2t^{2}}{2+\gamma^{2}}\left(1-\frac{\|V'\|\gamma}{t} \right)^{2},
     \end{align*}
     where $c_{0}=\gamma(t\gamma+2\|V'\|)/(2+\gamma^{2})$ and $W'_{0}=c_{0}V'/(\gamma\|V'\|)$ if $V'\neq 0$. If $V'=0$, we take $W_0=0$.
     Since $V\notin \mathbb{R}T_{0}^{\gamma}(b\Omega)$ it follows that $ f(W_{0}',c_{0})\leq 2t^{2}/(2+\gamma^{2})$, and hence
     \begin{align}\label{E:SDF}
       S\leq &2D(q)
       -2 \text{Im}\left(\langle\partial D(q),V\rangle\right)\\
       &+2a\bigl(h(W_{0}',c_{0})+h(iW_{0}',0)\bigr)+\left(1+\frac{2}{2+\gamma^{2}}\right)\frac{|\langle\partial D(q), V\rangle|^{2}}{D(q)}+\mathcal{O}\left(\|V\|^{3} \right),\notag
     \end{align}
     where it was used that $\|(W_{0}',c_{0})\|=\mathcal{O}(\|V\|)$.
     
     Suppose (temporarily) that there exists a constant $C>0$ such that
     \begin{align}\label{E:intermediategammapshclaim}
       h(W_{0}',c_{0})+h(iW_{0}',0)\geq -C\left\|(W_{0}',c_{0})\right\|^{3}.
     \end{align}
     Then using  \eqref{E:D3} for $S$ in \eqref{E:SDF} and the fact that $\text{Re}\left(\langle\partial D(q),V\rangle\right)=0$, would yield
     \begin{align*}
       2\mathcal{L}_{D(q)}(V,V)\leq \left(1+\frac{2}{2+\gamma^{2}}\right)\frac{\left|\langle\partial D(q), V \rangle \right|^{2}}{D(q)}+
       \mathcal{O}\left(\|V\|^{3} \right),
     \end{align*}
     which would imply \eqref{E:claimDDF}.
     
     That \eqref{E:intermediategammapshclaim} is indeed true may be shown by arguments analogous to the ones in the proof of \eqref{E:intermediatepshclaim}, using the facts that $\delta$ is $\gamma$-plurisubharmonic on $b\Omega$ and that $(W_{0}',ic_{0})$ was chosen to be in $\mathbb{R}T_{0}^{\gamma}(b\Omega)$, the cone of non-negativity of the complex Hessian of $\delta$.
  \end{proof}
  
 \begin{remark}
 In \cite{diederichfornaess77b}, pages 134--137, an example of a pseudoconvex domain is given such that $-(-\delta)^\eta$ is not plurisubharmonic for \textit{any} $\eta >0$. It is straightforward to check for this example, using the computations in \cite{diederichfornaess77b}, that $\mathcal{L}_{\delta(0)}(V,V)<0$ for any $V\in\mathbb{R}T_0\setminus
 \mathbb{C}T_0$, i.e., that $\delta$ is \textit{not} $\gamma$-plurisubharmonic for \textit{any} $\gamma>0$.
 
 \end{remark}

\bibliographystyle{plain}
\bibliography{HerMcN11}

\begin{thebibliography}{10}

\bibitem{AndPasSig_ccvx_book}
M.~Andersson, M.~Passare, and R.~Sigurdsson.
\newblock {\em Complex convexity and analytic functionals}, volume 225 of {\em
  Progress in Mathematics}.
\newblock Birkh\"auser Verlag, Basel, 2004.

\bibitem{diederichfornaess77b}
K.~Diederich and J.~E. Forn{\ae}ss.
\newblock Pseudoconvex domains: bounded strictly plurisubharmonic exhaustion
  functions.
\newblock {\em Inv. Math.}, 39:129--141, 1977.

\bibitem{federer59}
H.~Federer.
\newblock Curvature measures.
\newblock {\em Trans. Amer. Math. Soc.}, 93:418--491, 1959.

\bibitem{herbigfornaess07}
J.~E. Forn{\ae}ss and A.-K. Herbig.
\newblock A note on plurisubharmonic defining functions in $\mathbb{C}^2$.
\newblock {\em Math. Z.}, 257:769--781, 2007.

\bibitem{herbigfornaess08}
J.~E. Forn{\ae}ss and A.-K. Herbig.
\newblock A note on plurisubharmonic defining functions in $\mathbb{C}^n$.
\newblock {\em Math. Ann.}, 342:749--772, 2008.

\bibitem{GilbargTrudinger}
D.~Gilbarg and N.~Trudinger.
\newblock {\em Elliptic partial differential equations of second order, 2nd
  Ed.}, volume 224 of {\em Grundlehren der Mathematischen Wissenschaften}.
\newblock Springer-Verlag, New York, 1983.

\bibitem{GunRos}
R.~C. Gunning and H.~Rossi.
\newblock {\em Analytic functions of several complex variables}.
\newblock Prentice-Hall Inc., Englewood Cliffs, N.J., 1965.

\bibitem{HerMcN09}
A.-K. Herbig and J.~D. McNeal.
\newblock Convex defining functions for convex domains.
\newblock {\em J. Geom. Anal.}, to appear.

\bibitem{hormander_scv_book}
L.~H{\"o}rmander.
\newblock {\em An introduction to complex analysis in several variables},
  volume~7 of {\em North-Holland Mathematical Library}.
\newblock North-Holland Publishing Co., Amsterdam, third edition, 1990.

\bibitem{hormander_convex_book}
L.~H\"ormander.
\newblock {\em Notions of Convexity}, volume 127 of {\em Progress in
  Mathematics}.
\newblock Birkh\"auser, 1994.

\bibitem{krantz_scv_book}
S.~G. Krantz.
\newblock {\em Function Theory of Several Complex Variables, 2nd Ed.}
\newblock Wadsworth \& Brooks/Cole Mathematics Series. Wadsworth \&
  Brooks/Cole, 1992.

\bibitem{mcneal05}
J.~D. McNeal.
\newblock ${L}^2$ estimates on twisted {C}auchy-{R}iemann complexes.
\newblock {\em Cont. Math.}, 395:83--103, 2005.

\bibitem{Range_scv_book}
R.~M. Range.
\newblock {\em Holomorphic functions and integral representations in several
  complex variables}, volume 108 of {\em Graduate Texts in Mathematics}.
\newblock Springer-Verlag, New York, 1986.

\end{thebibliography}
\end{document}